\documentclass [10pt, leqno]{amsart}
\usepackage {graphicx}
\usepackage{amsmath,amssymb, amsthm}

\numberwithin{equation}{section}

\newtheorem{theorem}{Theorem}[section]
\newtheorem*{theorem*}{Theorem}
\newtheorem{proposition}[theorem]{Proposition}
\newtheorem{lemma}[theorem]{Lemma}
\newtheorem{corollary}[theorem]{Corollary}

\newtheorem{remark}[theorem]{Remark}

 \voffset=-1cm
\hoffset=-1cm
 \textheight=23cm
 \textwidth=14cm

\def\D{\mathbb{D}}
\def\C{\mathbb{C}}
\def\N{\mathbb{N}}
\def\Z{\mathcal{Z}}
\def\F{\mathcal{F}}
\def\E{\mathbf{E}}
\def\P{\mathbf{P}}

\def\S{\overset{\mbox{}\,\,\,\scriptscriptstyle \prime\prime}{\mathcal{S}}}

\begin{document}

\title[Zeros of sections of power series]{Zeros of sections of power series:\\ deterministic and random}
 \author{Jos\'e L. Fern\'andez}
 \thanks{Research partially supported by Fundaci\'on Akusmatika.}
 \address{Departamento de Matem\'aticas, Universidad Aut\'onoma de Madrid, 28770 Madrid, Spain.}
 \email{joseluis.fernandez\@@uam.es}

 \keywords{Power series, random power series, zeros, sections, Jentzsch--Szeg\H{o} theorem, equidistribution.}
 \subjclass[2010]{30B20,  30B10}
 \date{\today}

 \maketitle

 \begin{center}
 \textsc{\footnotesize To Juha Heinonen, in memoriam}
 \end{center}

 \begin{abstract}
{We present a streamlined proof (and some refinements) of a
characterization
 (due to F. Carlson and G. Bourion, and also to P. Erd\H{o}s and H. Fried) of  the so called Szeg\H{o} power series. This characterization is then applied to readily obtain some (more) recent
known results and some new results on the asymptotic distribution of
zeros of sections of  random power series, extricating quite
naturally the deterministic ingredients. Finally, we study the
possible limits of the zero counting probabilities of a power
series.
 }
 \end{abstract}

\

\section{Introduction}

The first aim of this paper is to present a streamlined proof and a
refined version of a characterization
 (due to F. Carlson and G. Bourion, and also to P. Erd\H{o}s and H. Fried) of
 the so called Szeg\H{o} power series: theorems
\ref{theor:CarlsonBourion} and \ref{theor:gauge and convergence of
probabilities}.

 That characterization is then applied in Section 5 to readily obtain some (more) recent
known results and some new results on the
asymptotic distribution of zeros of sections of  random power
series, extricating quite naturally the deterministic ingredients. Finally, in Section 6 we study the
possible limits of the zero counting probabilities associated to a power
series.

\smallskip

We shall denote by $\F$ the class of power series  whose radius of convergence is
$1$. The results which we are about to discuss concerning such $f$
can be translated, with  obvious scaling, to power series of positive and finite radius of convergence.

\smallskip

For a given power series $f \in \F$ and for each $n \ge 0$, we
denote by $s_n=s_n(f)$ the $n$-th section of the power series:
$s_n(z)=\sum_{k=0}^n a_k z^k\, ,$ and by $\Z_n$ the (multi-)set of
the  zeros of~$s_n$. To each non constant $s_n$ we associate two measures: we denote  by
$\mu_n=\mu_n(f)$ the zero counting measure
$$
\mu_n=\frac{1}{n} \sum_{w\in \Z_n} \delta_w\, ,
$$
a weighted sum of Dirac deltas placed at the zeros of $s_n$ repeated
according to their multiplicity, and we denote by $\rho_n=\rho_n(f)$ the
circular projection of $\mu_n$:
$$
\rho_n=\frac{1}{n} \sum_{w\in \Z_n} \delta_{|w|}\, .
$$If $a_n\neq 0$, then $\mu_n$ and $\rho_n$ are probability measures. If $a_n=0$ (and $s_n$ is non constant), we append the definition above by adding a Dirac delta at $\infty_{\scriptscriptstyle \C}$ with  mass $n-\textrm{deg}(s_n)$ so that $\mu_n$ and $\rho_n$ become  probability measures on the Riemann sphere $\widehat{\C}$.  By $F_n$ we denote the distribution function of $\rho_n$, given by
$$
F_n(t)=\mu_n\big(|z|\le t\big)\,, \quad \mbox{for $t\ge 0$}\, ,
$$
thus $F_n(t)$ is the average number of zeros of $s_n$ within the disk $\{z\in \C: |z|\le t\}$.

By Hurwitz's theorem, $\lim_{n \to \infty} F_n(t)=0$ for any $t<1$;
actually, $F_n(t)=O(1/n)$,  for any fixed $t<1$.

\smallskip

We are concerned in this paper with the convergence as $n \to
\infty$ of the probabilities $\mu_n$ and $\rho_n$ associated to a
given $f\in \F$ and with the potential limits which these
probability measures may have. By convergence we mean  weak
convergence, so that a sequence $(\lambda_n)_{n \ge 0}$ of measures
on $\widehat{\C}$ converges to a measure $\lambda_\infty$ if
$$
\lim_{n\to \infty} \int h \, d\lambda_n=\int h \, d\lambda_\infty\, ,
\quad \mbox{for any function $h$ bounded and continuous on $\widehat{\C}$}\,
.
$$

We shall denote by $\Lambda$ the uniform probability (normalized
Lebesgue measure) on $\partial \D$. Convergence of the zero counting
probabilities $\mu_n$ to $\Lambda$ means that for each $h$ as above,
$$
\lim_{n \to \infty} \frac{1}{n} \sum_{w \in \mathcal{Z}_n}
h(w)=\frac{1}{2\pi}\int_{0}^{2 \pi} h(e^{\imath \vartheta})\, d
\vartheta\, .$$

\smallskip
In Section 2, we shall discuss the main results about the asymptotic
behavior of $\mu_n$ and~$\rho_n$ for a given $f\in \F$. Section 3 is
devoted to results connecting coefficients and zeros of
polynomials, and some proofs. Section 4 discusses the aforementioned
streamlined proof of the characterization of the Szeg\H{o} class.
This characterization is then applied in Section~5 to the study of
the sequences of zero counting measures of random power series.
Finally, returning to the deterministic context, Section 6 discusses
the possible limits of the zero counting measures of a given $f$ and
exhibits an example of a power series whose sequence of $\rho_n$ is
dense.

\section{Asymptotics of zero counting measures}\label{section:general asymptotic results}

Here we describe the main results about asymptotics of zero counting
measures, from the seminal work of Jentzsch and Szeg\H{o}
up to the characterization of those holomorphic functions $f$ whose
zero counting measures converge to the uniform probability
$\Lambda$.

\begin{theorem}\label{theor:JentzschSzego}

\mbox{}\smallskip

\indent {\rm i)} For any $f\in \F$,
there is a subsequence $(n_k)_{k \ge 1}$ such that $\rho_{n_k} \
\mbox{converges to} \ \delta_1$.

\smallskip

\indent {\rm ii)}  Given $f\in \F$, if for
a subsequence $(n_k)_{k \ge 1}$ the probability measures
$\rho_{n_k}$ converges to~$\delta_1$, then $\mu_{n_k}$ converges to
$\Lambda$, and conversely.
\end{theorem}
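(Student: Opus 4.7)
The plan is to combine Hurwitz's theorem (recalled in the introduction) with two elementary identities between the coefficients of $s_n$ and symmetric functions of its zeros. I would first assume $a_0\ne 0$ (the general case reduces to this by factoring out the appropriate power of $z$). Writing $d_n=\deg(s_n)$, Vi\`ete's formula gives $a_0/a_n=(-1)^{d_n}\prod_{w\in\Z_n}w$, while expanding $\log(s_n(z)/a_0)=\sum_w\log(1-z/w)$ around the origin yields the Newton identity
$$
\sum_{w\in\Z_n}\frac{1}{w^k}=-k\,[z^k]\log\!\big(s_n(z)/a_0\big),\qquad k\ge 1.
$$
Since $s_n\to f$ uniformly on compacta of $\D$ and $f(0)\ne 0$, the Taylor coefficients of $\log(s_n/a_0)$ converge to those of $\log(f/a_0)$ and are therefore bounded in $n$, so $\int w^{-k}\,d\mu_n(w)=O(1/n)$ for every fixed $k\ge 1$. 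This is the key moment bound on which part (ii) will rest.

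For part (i), I would take log-moduli in Vi\`ete and push forward, obtaining $\int_0^\infty \log t\,d\rho_n(t)=\tfrac{1}{n}\log|a_0/a_n|$. Cauchy--Hadamard and the hypothesis that the radius of convergence equals $1$ give $\limsup|a_n|^{1/n}=1$, so along some subsequence $(n_k)$ one has $a_{n_k}\ne 0$ and $\tfrac{1}{n_k}\log|a_0/a_{n_k}|\to 0$. By compactness of probability measures on $[0,\infty]$, I would then extract a further weak limit $\rho_\infty$. Hurwitz's theorem forces $\mathrm{supp}(\rho_\infty)\subseteq[1,\infty]$, while $\log t$ is lower-semicontinuous and nonnegative on this set, so the Portmanteau inequality yields $\int\log t\,d\rho_\infty\le 0$. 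The two bounds together force $\int \log t\,d\rho_\infty=0$; since $\log t$ vanishes on $[1,\infty]$ only at $t=1$ (and $\log\infty=+\infty$ rules out any atom at $\infty$), this identifies $\rho_\infty=\delta_1$.

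For part (ii) the converse is immediate, since $z\mapsto|z|$ is continuous on $\widehat{\C}$ and so pushes weak convergence forward. For the direct implication, assume $\rho_{n_k}\to\delta_1$, extract a weak limit $\mu_\infty$ of a sub-subsequence of $(\mu_{n_k})$ on $\widehat{\C}$, and observe that its circular projection equals $\delta_1$, forcing $\mathrm{supp}(\mu_\infty)\subseteq\partial\D$. The estimate $|s_n(z)-a_0|\le\sum_{k\ge 1}|a_k||z|^k$, combined with Cauchy coefficient bounds inside the disk of convergence, will produce $\delta\in(0,1)$ with $|s_n|\ge|a_0|/2$ on $\{|z|\le\delta\}$ for every $n$, so every $\mu_n$ is supported in $\{|w|\ge\delta\}$. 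Hence $w\mapsto w^{-k}$ agrees on these supports with a bounded continuous function on $\widehat{\C}$ (setting the value at $\infty$ to $0$), and weak convergence together with the moment bound gives $\int w^{-k}\,d\mu_\infty=0$ for all $k\ge 1$. Since $\mu_\infty$ sits on $\partial\D$, where $w^{-k}=\bar w^{k}$, all nonzero Fourier coefficients of $\mu_\infty$ vanish, so $\mu_\infty=\Lambda$; as every subsequential limit equals $\Lambda$, the whole sequence $\mu_{n_k}$ converges to $\Lambda$.

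The main obstacle I anticipate is the passage in (ii) from the purely algebraic moment identity to genuine weak-convergence information: the test function $1/w^k$ is neither bounded nor continuous on $\widehat{\C}$, and one must keep the zeros of $s_n$ simultaneously away from $0$ (handled uniformly by $a_0\ne 0$, not by Hurwitz) and, along the chosen subsequence, away from $\infty$ (handled only asymptotically, via $\rho_{n_k}\to\delta_1$) in order to turn the moment bound into an honest statement about the weak limit.
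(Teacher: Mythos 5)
Your proposal is correct, and for part (ii) it is essentially the paper's argument in a slightly different guise: both rest on the Newton-identity observation that $\sum_{w\in\Z_n}w^{-m}$ is a fixed function of $a_1,\dots,a_m$ once $a_0$ is normalized, hence $O(1)$ in $n$, and both combine that with the radial concentration $\rho_{n_k}\to\delta_1$ to kill the nonzero Fourier coefficients. The paper does this directly, bounding $\bigl|\tfrac{1}{n_k}\sum_{w}e^{-\imath m\theta(w)}\bigr|$ and then quoting Weierstrass approximation; you reach the same conclusion through a subsequential weak limit $\mu_\infty$ and uniqueness of the limit, which is a matter of taste. For part (i) you do take a genuinely different route. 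The paper applies Jensen's formula to $s_n$ and to its reversed companion polynomial, obtaining the pointwise inequality \eqref{eq:weaktype jensen for sections}; picking $n_k$ with $|a_{n_k}|^{1/n_k}\to 1$ then gives $\lim_k F_{n_k}(T)=1$ for all $T>1$ directly, with no compactness argument and with the quantitative bonus \eqref{eq:prior Carlson} (which the paper re-uses later). You instead take absolute values in Vi\`ete, $\int\log t\,d\rho_n=\tfrac1n\log|a_0/a_n|$, and close the argument by weak compactness plus a Portmanteau estimate. Both are elementary routes from $\limsup|a_n|^{1/n}=1$; the Jensen route is sharper and recyclable, your route is arguably the most bare-hands one.

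One caution on the Portmanteau step in part (i) as you wrote it. You invoke Hurwitz only to place $\mathrm{supp}(\rho_\infty)$ in $[1,\infty]$, and then say that $\log t$ is lower semicontinuous and nonnegative \emph{on that set}, hence $\int\log t\,d\rho_\infty\le 0$. That is not how Portmanteau works: the one-sided inequality $\int g\,d\rho_\infty\le\liminf\int g\,d\rho_{n_k}$ for a lower semicontinuous $g$ needs $g$ bounded below on the space where all the $\rho_{n_k}$ live, not merely on the support of the limit, and $\log t$ is not bounded below on $[0,\infty]$. The fix is exactly the uniform lower bound $|s_n|\ge|a_0|/2$ on $\{|z|\le\delta\}$ that you describe only when discussing part (ii): it places every $\rho_{n_k}$ on $[\delta,\infty]$, where $\log t\ge\log\delta$ and the inequality is legitimate. (Equivalently, split $\log t=\log^+t-\log^-t$; the bounded continuous piece $\log^- t$ tends to $\int\log^-t\,d\rho_\infty=0$ by Hurwitz, and Portmanteau for the nonnegative l.s.c.\ piece $\log^+t$ gives $\int\log^+t\,d\rho_\infty\le 0$.) Once that estimate is moved up to part (i), your argument is complete.
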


Part i) of Theorem  \ref{theor:JentzschSzego} is due to
Jentzsch, \cite{Jentzsch};
it claims that there is a subsequence $\mu_{n_k}$ of  the $\mu_n$
asymptotically concentrated on $\partial \D$. Since $\lim_{k \to
\infty} F_{n_k}(t)=0$ for each $t <1$, the conclusion of part $i)$
is equivalent to the statement that $\lim_{k \to \infty}
F_{n_k}(T)=1$, for each $T>1$.


\smallskip

The second part,  ii), of Theorem
\ref{theor:JentzschSzego}, which is due to Szeg\H{o},
\cite{Szego},  says that just simple radial concentration of the
mass of $\mu_n$ towards  the unit circle $\partial \D$ is equivalent
to the (much more precise) statement that the $\mu_n$ converge to the
uniform probability $\Lambda$ on $\partial \D$.

\smallskip

The paradigmatic example of Theorem \ref{theor:JentzschSzego}
is the power series ${1}/(1-z)=\sum_{k=0}^\infty z^k$. In this case
$\Z_n$ consists of  the $(n{+}1)$-th roots of unity except $z=1$, and
the whole sequence~$\mu_n$ converges to $\Lambda$.
By contrast, for the lacunary power series $f(z)=\sum_{k=0}^\infty
z^{2^k}$ a simple application of Rouch\'e's theorem gives that
$\mu_{2^k}$ converges to $\Lambda$, while,  since $s_{2^k-1}\equiv
s_{2^{k-1}}$, the probabilities $\rho_{2^{k-1}}$ converge to
$(\delta_1+\delta_{\infty})/2$;  the whole sequence of zero
counting measures of $f$ does not converge.

\smallskip

For a general treatment and a modern account of the theory of asymptotic distribution of zeros of polynomials we would like to refer to \cite{AndrievskiiBlatt}; the reader will find there complete references to the many authors who have contributed to the subject.

\smallskip

We say that a power series $f\in \F$ is a \textbf{Szeg\H{o} power
series} (and belongs to the \textbf{Szeg\H{o} class} $\S$) if the corresponding \textit{complete} sequence of zero counting
measures $\mu_n$ converges to the measure $\Lambda$.

Naturally, we would like to have conditions on the coefficients
$a_n$ of the power series $f$ which would imply that $f$ is a
Szeg\H{o} power series. Szeg\H{o} gave in  \cite{Szego} one first
such condition:

\begin{theorem}[Szeg\H{o}]\label{theor:Szego} \
If
\begin{equation}
\label{eq:szegos condition}
\lim_{n \to \infty} \sqrt[n]{|a_n|}=1\, ,
\end{equation}
then $f \in \S$.
\end{theorem}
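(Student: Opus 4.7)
The plan is to reduce Theorem \ref{theor:Szego} to Theorem \ref{theor:JentzschSzego}(ii) by establishing the seemingly weaker statement that the \emph{full} sequence $\rho_n$ converges weakly to $\delta_1$. Since each $\rho_n$ is a probability supported in $[0,\infty]\subset \widehat{\C}$, this convergence is equivalent to the two radial tail bounds $F_n(r)\to 0$ for every $r<1$ and $\rho_n([R,\infty])\to 0$ for every $R>1$.

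The inner tail is immediate from what the excerpt already records: by Hurwitz's theorem the number of zeros of $s_n$ in $\{|z|\le r\}$ stays bounded as $n\to\infty$ for fixed $r<1$, so $F_n(r)=O(1/n)$.

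The substantive work is the outer tail. The idea is to apply Jensen's formula on $\partial \D$ to the polynomial $s_n$ (which has $a_n\neq 0$ for all large $n$, by hypothesis). Writing $\Z_n=\{w_1,\ldots,w_n\}$ for its zero multiset, one gets
\[
\log|a_n|+\sum_{j=1}^n \log^+|w_j|=\frac{1}{2\pi}\int_0^{2\pi}\log|s_n(e^{\imath\vartheta})|\,d\vartheta\le \log M_n,
\]
where $M_n=\max_{|z|=1}|s_n(z)|$. A Chebyshev-type inequality then bounds the outer zero count by
\[
\#\{j:|w_j|\ge R\}\le \frac{1}{\log R}\sum_{j} \log^+|w_j|\le \frac{\log M_n-\log|a_n|}{\log R}.
\]
For this to be $o(n)$ one needs $\log M_n=o(n)$ and $\log|a_n|=o(n)$. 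The crude bound $M_n\le \sum_{k\le n}|a_k|$ combined with the subexponential growth of the $|a_k|$ (a direct consequence of $\sqrt[n]{|a_n|}\to 1$) gives the first; the Szeg\H{o} hypothesis itself is precisely the second.

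The main obstacle, and the sole place where the \emph{full} strength of the hypothesis enters, is securing the lower bound $\log|a_n|\ge -\varepsilon n$ eventually: the radius of convergence being $1$ only yields $\limsup_n \log|a_n|/n\le 0$, so without the matching liminf one could lose control in the Jensen identity and let a macroscopic fraction of the zeros escape to infinity.
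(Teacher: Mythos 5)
Your argument is correct and takes essentially the same route as the paper's first proof. Both rest on Jensen's formula applied to the section $s_n$ on $\partial\D$ together with the two inputs
$\limsup_n (\log M_n)/n\le 0$ (radius of convergence $1$) and $\liminf_n (\log|a_n|)/n\ge 0$ (the hypothesis), followed by an appeal to Theorem~\ref{theor:JentzschSzego}~ii) to pass from $\rho_n\to\delta_1$ to $\mu_n\to\Lambda$. The only cosmetic difference is that the paper states Jensen symmetrically, applying it to $s_n$ and to its reversed companion polynomial to get the two-sided identity $\frac1n\sum_{w}\bigl|\ln|w|\bigr|=\frac1{2\pi}\int_0^{2\pi}\ln\bigl(|s_n|^{2/n}/(|a_0|^{1/n}|a_n|^{1/n})\bigr)\,d\vartheta$, whereas you use the one-sided form with $\log^+$ and handle the inner zeros separately via Hurwitz; these are equivalent bookkeeping choices. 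The paper also records a second, independent proof via the product-of-zeros identity $|a_0|/|a_n|=\prod|w|$ (Szeg\H{o}'s original argument), which you do not touch but which is the one the later sections generalize.

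One small point worth tightening: where you write that $\log M_n=o(n)$ and $\log|a_n|=o(n)$ are what is needed, you only need the one-sided estimates $\limsup_n(\log M_n)/n\le 0$ and $\liminf_n(\log|a_n|)/n\ge 0$, since $\log M_n-\log|a_n|\ge 0$ by Jensen; and to get the lower bound $\log M_n\ge 0$ (hence the full $o(n)$ you claim) one should normalize $a_0=1$, as the paper does.
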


Condition \eqref{eq:szegos condition} is quite restrictive: for each integer $N \ge 2$, the  power series $1/(1-z^N)=\sum_{k=0}^\infty z^{kN}$ belongs to $\S$, but $\liminf_{n \to \infty}\sqrt[n]{|a_n|}=0$.

\smallskip

Theorem \ref{theor:CarlsonBourion} below is Carlson's
\textit{characterization} (in terms of the coefficients $a_n$) of the Szeg\H{o}
class $\S$; to state it we need to introduce a few concepts and some
further notation.

\medskip

\subsection{Gauge and index of power series}\label{sect:gauge and index} Consider a power series
$f(z)=\sum_{n=0}^\infty a_n z^n \in \F$. For each $\gamma \in [0,1)$, define $$
A_n(\gamma)=\max_{(1-\gamma) n \le k\le n}|a_k|\, , \quad \mbox{for each $n \ge 0$}\, ,$$
and
$$
L(\gamma)=\liminf_{n \to \infty} \sqrt[n]{A_n(\gamma)}\, .
$$
Observe that
$L(\gamma)$ increases with $\gamma$
and that $0 \le L(\gamma)\le 1$, for $\gamma \in[0,1)$.

\smallskip

We define the \textbf{index} $\Gamma$ of a power series $f\in \F$ as
$$
\Gamma:=\inf\{\gamma \in (0,1):  L(\gamma)=1\}\, .
$$
We set $\Gamma=1$ if $L(\gamma)<1$ for each
$\gamma\in (0,1)$; this occurs, for instance, for $\sum_{k=0}^\infty
z^{k!}$.

\smallskip

The \textbf{gauge} $G$ of a power series $f \in \F$ is defined by
$$
G:=\lim_{\gamma \downarrow  0} L(\gamma)=\inf_{\gamma \in (0,1)} L(\gamma)\, .
$$
Observe that $0 \le G \le 1$ and that gauge $G=1$ is equivalent to index $\Gamma=0$.

\smallskip

A related but different notion of \lq\lq gap index\rq\rq appears in \cite{Rosembloom}, page 277.

\smallskip

Lacunary series like $\sum_{k=0}^\infty z^{q^k}$, where
$q$ is an integer, $q \ge 2$, have  index $\Gamma=1-1/q$ and gauge
$G=0$. More generally,

\begin{lemma}
For any $t \in (0,1]$ and any $g \in [0,1)$,  there exists a power series $\sum_{n=0}^\infty a_n z^n \in \F$ with index $\Gamma=t$ and gauge $G=g$.
\end{lemma}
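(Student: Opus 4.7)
The plan is to realize the prescribed pair $(\Gamma,G)=(t,g)$ by superposing a suitably lacunary skeleton on a geometric background. Concretely, pick a strictly increasing sequence of integers $(n_k)_{k\ge 1}$ with gap ratios
$$
\lim_{k\to\infty} \frac{n_{k+1}}{n_k}=\frac{1}{1-t}
$$
(taking the right-hand side to be $+\infty$ when $t=1$); for instance $n_k=\lfloor(1-t)^{-k}\rfloor$ works when $t<1$, and $n_k=2^{k^2}$ works when $t=1$. Define the coefficients by $a_{n_k}=1$ for all $k$ and $a_n=g^n$ for all other $n$ (interpreting $0^0=0$ so the case $g=0$ just gives a lacunary series supported on $\{n_k\}$). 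Since the $a_{n_k}=1$ already force $\limsup\sqrt[n]{|a_n|}\ge 1$ and $|a_n|\le 1$ forces $\limsup \le 1$, the radius of convergence is $1$, i.e.\ $f\in\F$.

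Next I would compute $A_n(\gamma)$ by splitting into two cases according to whether the window $[(1-\gamma)n,n]$ meets the skeleton $\{n_k\}$: if it does, then $A_n(\gamma)=1$, and if it does not, then $A_n(\gamma)=\max_{(1-\gamma)n\le k\le n} g^k = g^{\lceil(1-\gamma)n\rceil}$ (using that $g<1$ so $g^k$ decreases). The $n$ for which some $n_k$ lies in $[(1-\gamma)n,n]$ are precisely those in $\bigcup_k [n_k,\,n_k/(1-\gamma)]$; the complementary ``gap'' intervals $(n_k/(1-\gamma),\,n_{k+1})$ are eventually non-empty exactly when $n_{k+1}/n_k>1/(1-\gamma)$ for large $k$, i.e.\ when $\gamma<t$.

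From this dichotomy I would read off $L(\gamma)$: for $\gamma>t$ the gap intervals eventually disappear, so $A_n(\gamma)=1$ for all large $n$, giving $L(\gamma)=1$; for $\gamma<t$ the gap intervals contain arbitrarily large $n$, on which $\sqrt[n]{A_n(\gamma)}=g^{\lceil(1-\gamma)n\rceil/n}\to g^{1-\gamma}<1$, while on the non-gap $n$ one has $\sqrt[n]{A_n(\gamma)}=1$, so the liminf equals $g^{1-\gamma}$ (with the understanding that this is $0$ when $g=0$). Consequently
$$
\Gamma=\inf\{\gamma\in(0,1):L(\gamma)=1\}=t, \qquad G=\lim_{\gamma\downarrow 0} L(\gamma)=\lim_{\gamma\downarrow 0} g^{1-\gamma}=g,
$$
as required.

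The main technical point, and essentially the only non-formal step, is verifying that the gap/non-gap dichotomy really sits on either side of $\gamma=t$; this reduces to the elementary comparison of $n_{k+1}/n_k$ with $1/(1-\gamma)$, which is exactly why the choice of the ratio $1/(1-t)$ is dictated by the target index. The value of $L$ at the critical $\gamma=t$ is not needed and may depend on the fine asymptotics of $(n_k)$, but this is irrelevant since $\Gamma$ is defined as an infimum.
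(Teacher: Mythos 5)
Your construction is essentially identical to the paper's: a skeleton $\{n_k\}$ with gap ratios tending to $1/(1-t)$ (equivalently $n_k/n_{k+1}\to 1-t$), coefficients $1$ on the skeleton and $g^n$ off it, and the same dichotomy on whether the window $[(1-\gamma)n,n]$ meets the skeleton to compute $L(\gamma)=1$ for $\gamma>t$ and $L(\gamma)=g^{1-\gamma}$ for $\gamma<t$. The proof is correct and matches the paper's approach.
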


This lemma, combined with the observation that $G=1$ is equivalente to $\Gamma=0$, means that $\{(\Gamma,G): \, f\in \F\}=(0,1]\times [0,1) \cup \{(0,1)\}$.

\begin{proof}
Let $(m_k)_{k \ge 1}$ be an increasing sequence of positive integers such that $m_{k}/m_{k+1}\to 1-t$, as $k \to \infty$. Denote by $\mathcal{M}$ the set $\mathcal{M}=\{m_k: k \ge 1\}$.

Define the coefficient sequence $(a_n)_{n \ge 0}$ by $a_n=1$ if $n\in \mathcal{M}$ and $a_n=g^n$ otherwise. Observe that $\sum_{n=0}^\infty a_n z^n \in \F$.

For each $n \ge 0$ and every $\gamma \in (0,1)$ we have that
$$\sqrt[n]{A_n(\gamma)}=\begin{cases}
1\, , &\mbox{if  $[n(1-\gamma),n]\cap \mathcal{M}\neq\emptyset$}\,,\\[3pt]
g^{\lceil n(1-\gamma)\rceil/n}\, , &\mbox{if  $[n(1-\gamma),n]\cap\mathcal{M}= \emptyset$}\, .\\
\end{cases}$$
Observe that $L(\gamma)\ge g^{1-\gamma}$, for any $\gamma \in(0,1)$.

Next we show that
$$L(\gamma)=\begin{cases}
1\, , &\mbox{if $1>\gamma>t$}\, ,\\[3pt]
g^{1-\gamma}\, ,&\mbox{ if $0<\gamma<t$}\, .
\end{cases}
$$

Let $1>\gamma>t$. If $m_k <n \le m_{k+1}$ then $n(1-\gamma)\le m_{k+1}(1-\gamma)<m_k$, for $k$ is large enough. Thus $\sqrt[n]{A_n(\gamma)}=1$, for $n$ large enough, and so $L(\gamma)=1$.

Let $0<\gamma<t$. Since $(1-\gamma)(m_{k+1}-1)>m_k$, for $k$ large enough, we have that  $\sqrt[n]{A_n(\gamma)}=g^{\lceil n(1-\gamma)\rceil/n}$, for $n=m_{k+1}-1$ and $k$ large enough. Therefore, $L(\gamma)\le g^{1-\gamma}$ and $L(\gamma)=g^{1-\gamma}$.

We conclude that  $\Gamma=t$, since $g<1$, and that $G=\lim_{\gamma\downarrow 0}L(\gamma)= g$, since  $t>0$.
\end{proof}

\begin{remark}
[Index and Ostrowsky gaps] \rm Power series $f$ with positive index $\Gamma >0$ are said to have
Ostrowsky (or Hadamard--Ostrowsky) gaps. This notion appeared first
in Ostrowsky characterization of overconvergent power series: a
power series $f\in \F$, analytically continuable beyond the unit
circle $\partial \D$, is overconvergent if and only if it has index
$\Gamma
>0$.\end{remark}

\begin{remark}[Gauge and index of rational functions]\rm
For $f(z)=1/(1-z^N)$, with $N\ge 2$, one has $L(0)=0$, but $L(\gamma)=1$ for each $\gamma >0$, and so $f$ has index $\Gamma=0$ and gauge $G=1$.

In general, any power series $f \in \F$ which defines a rational function
has index $\Gamma=0$. For, let $f(z)=\sum_{k=0}^\infty a_k
z^k=P(z)/Q(z)$, for each $z \in \D$, where $P,Q$ are relatively
prime polynomials and let $m$ be the degree of the denominator.
Consider $$ \alpha_n=\max\{|a_n|, |a_{n-1}|, \ldots, |a_{n-m+1}|\}\,
, \quad \mbox{for $n \ge m-1$}\, .
$$
A result of P\'olya (\cite{Polya}, Hilfssatz III, and also, \cite{PolyaSzego}, problem 243) gives  that
$$
\lim_{n \to \infty} \sqrt[n]{\alpha_n}=1\, .
$$
Note that we have \lq$\lim$\rq \ above,  not just \lq$\limsup$\rq. \ For any $\gamma \in (0,1)$ we have that
$$
A_n(\gamma) \ge \alpha_n\, , \quad \mbox{for $n \ge m/\gamma$}\,;
$$
consequently,
$$
L(\gamma)=\liminf_{n \to \infty}\sqrt[n]{A_n(\gamma)} \ge \liminf_{n \to \infty}\sqrt[n]{\alpha_n}=1\, ,
$$
so that $\Gamma=0$.
\end{remark}

\subsection{Characterization of the Szeg\H{o} class}\label{sect:characterization of SZ}

The following  characterization of Szeg\H{o} power series was
announced
by F. Carlson, \cite{Carlson}. A complete proof appeared in the
monograph \cite{Bourion}, page 19, of G. Bourion; later on, P.
Erd\H{o}s and H. Fried, \cite{ErdosFried}, gave an alternative proof
of the characterization (and credit Theorem
\ref{theor:CarlsonBourion} to Bourion).

\begin{theorem}[Carlson-Bourion]\label{theor:CarlsonBourion} Let
$f$ be a power series in $\F$. Then $$f \in \S \ \mbox{if and only
if \, its gauge $G$ is  $1$}\,.$$
\end{theorem}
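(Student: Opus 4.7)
The plan is to use Theorem~\ref{theor:JentzschSzego}(ii) to reformulate the statement in terms of the circular measures $\rho_n$: since Hurwitz gives $\rho_n([0,t])=O(1/n)$ for every $t<1$, membership in $\S$ is equivalent to the pair of conditions \textbf{(i)} $d_n/n\to 1$ and \textbf{(ii)} $m_n(T)/n\to 0$ for every $T>1$, where $d_n:=\deg(s_n)$ and $m_n(T):=\#\{w\in\Z_n:|w|>T\}$. So it suffices to prove that $G=1$ is equivalent to \textbf{(i)} and \textbf{(ii)} holding simultaneously.

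For the implication $G=1\Rightarrow f\in\S$ I argue by contrapositive. If \textbf{(i)} fails, there exist $\gamma>0$ and a subsequence along which $d_n<(1-\gamma)n$; then every $a_k$ with $k\in[(1-\gamma)n,n]$ vanishes, so $A_n(\gamma)=0$, giving $L(\gamma)=0$, a contradiction. If instead \textbf{(i)} holds but \textbf{(ii)} fails, pick $T>1$ and $\gamma>0$ with $m_{n_k}(T)\ge\gamma n_k$ along a subsequence. I pass to the reciprocal polynomial $\widetilde{s}_n(z):=z^n s_n(1/z)=\sum_{k=0}^n a_{n-k}z^k$, whose $j$-th Taylor coefficient at $0$ is $a_{n-j}$ and whose zeros in $\{|z|\le 1/T\}$ are the reciprocals of the large-modulus zeros of $s_n$. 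Jensen's formula applied to $\widetilde{s}_{n_k}$ on the disk $\{|z|\le r\}$ with $r\in(1/T,1)$, combined with the bound $\tfrac{1}{n_k}\log\sup_{|z|=r}|\widetilde{s}_{n_k}(z)|=o(1)$ (consequence of the radius-$1$ hypothesis, since $\sup_{|z|=r}|\widetilde{s}_n|=r^n\sup_{|w|=1/r}|s_n(w)|\le(n+1)(1+\delta)^n$), forces $|a_{n_k}|^{1/n_k}\le (rT)^{-\gamma(1+o(1))}<1$. To upgrade this to a uniform bound on $A_{n_k}(\gamma/2)^{1/n_k}$, I would factor out the small zeros of $\widetilde{s}_{n_k}$ via a Blaschke product on $\{|z|\le r\}$ and apply Cauchy estimates to the residual bounded analytic factor, aiming at $|a_{n_k-j}|^{1/n_k}\le\eta<1$ uniformly for $j\in[0,\gamma n_k/2]$, which contradicts $L(\gamma/2)=1$.

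For the converse $f\in\S\Rightarrow G=1$, the convergence $\mu_n\to\Lambda$ directly gives \textbf{(i)} from $\mu_n(\{\infty_{\scriptscriptstyle\C}\})\to 0$. Fix $\gamma>0$; eventually $d_n\ge(1-\gamma)n$, so $A_n(\gamma)\ge|a_{d_n}|$ and it suffices to prove $|a_{d_n}|^{1/n}\to 1$. Jensen's identity, rewritten as
\begin{equation*}
\tfrac{1}{n}\log|a_{d_n}|=\tfrac{1}{2\pi n}\int_0^{2\pi}\log|s_n(e^{\imath\vartheta})|\,d\vartheta-\tfrac{1}{n}\sum_{|w_j|>1}\log|w_j|,
\end{equation*}
has first right-hand term $o(1)$ by the radius-$1$ hypothesis and second term non-negative; one needs to show the latter also tends to $0$. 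Since $\rho_n\to\delta_1$ provides only pointwise decay of $\rho_n((r,\infty])$ while $\log|w|$ is unbounded at $\infty$, the tail is handled by applying Jensen at a second radius $R>1$, yielding $\tfrac{1}{n}\sum_{|w_j|>R}\log|w_j|\le\tfrac{1}{n}\log\sup_{|z|=R}|s_n(z)|\le\log R+o(1)$, and then letting $R\downarrow 1$.

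The main obstacle in the forward direction is the promotion of the bound on the single coefficient $|a_{n_k}|$ to a uniform bound over the full top window $[(1-\gamma/2)n_k,n_k]$: the naive Blaschke--Cauchy approach contains competing exponential factors whose balance depends sensitively on the choice of factorisation radius versus Cauchy radius, and making this balance go through is where I expect the principal technical effort to lie. In the reverse direction the analogous subtlety is the exchange of limit and integral against an unbounded integrand, resolved by the double-radius Jensen trick above; the essential input in both directions is the radius-$1$ hypothesis on~$f$, which ensures that $\tfrac{1}{n}\log\sup_{|z|=R}|s_n(z)|\le\log R+o(1)$ for every $R\ge 1$.
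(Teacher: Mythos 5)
Your overall plan — reduce via Theorem~\ref{theor:JentzschSzego}(ii) to showing $\rho_n\to\delta_1\Leftrightarrow G=1$, and then split the failure of $\rho_n\to\delta_1$ into mass escaping to $\infty_{\scriptscriptstyle\C}$ versus mass stuck at finite radius $>1$ — is sound, and the first subcase (if $d_n/n\not\to 1$ then $L(\gamma)=0$) is correct and clean. The technique you choose, Jensen's formula applied to the reversed companion polynomial, is genuinely different from the paper's, which proves the quantitative Theorem~\ref{theor:gauge and convergence of probabilities} using Vi\`ete's inequalities (\ref{eq:bounds2}) with the entropy bound (\ref{eq:binomial bounds}) for direction (\ref{eq:no mass beyond 1/G}) and Van Vleck's bounds (\ref{eq:vanvleck bound}) for direction (\ref{eq:some mass before 1/G}), and then derives Theorem~\ref{theor:CarlsonBourion} in two lines. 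However, both of the hard steps in your sketch contain gaps, one acknowledged and one not.

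In the forward direction, you frankly flag the missing step: upgrading the Jensen bound on the single coefficient $|a_{n_k}|$ to a uniform bound over the window $[(1-\gamma/2)n_k, n_k]$. This is indeed where the content is; the competing exponentials in the Blaschke--Cauchy approach are delicate. The paper sidesteps the issue entirely: Van Vleck's bound (\ref{eq:vanvleck bound}), applied to $s_n$, goes directly from the size of $A_n(\gamma)$ to a guaranteed proportion $\gamma$ of zeros with large modulus, without any per-coefficient estimate. If you want to persist with Jensen, the needed lemma is precisely inequality (\ref{eq:some mass before 1/G}) of Theorem~\ref{theor:gauge and convergence of probabilities}; it is worth comparing your attempted balance of radii to the one-step application of (\ref{eq:vanvleck bound}).

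In the reverse direction the gap is more serious and is a mathematical error, not merely an omitted computation. You reduce to proving $|a_{d_n}|^{1/n}\to 1$, but this statement is strictly stronger than $G=1$ and is in fact \emph{false} in general for $f\in\S$. Take $a_n=1$ for $n$ even and $a_n=2^{-n}$ for $n$ odd: then $d_n=n$, $\liminf_n|a_n|^{1/n}=1/2$, yet for every $\gamma\in(0,1)$ the window $[(1-\gamma)n,n]$ eventually contains an even index so $A_n(\gamma)\ge 1$, giving $L(\gamma)=1$ and hence $G=1$ and $f\in\S$. The gauge is a max over a window precisely to tolerate such isolated small coefficients; one cannot pin it down by looking at $a_{d_n}$ alone. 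Moreover, the specific inequality you invoke, $\tfrac{1}{n}\sum_{|w_j|>R}\log|w_j|\le\tfrac{1}{n}\log\sup_{|z|=R}|s_n(z)|$, is incorrect: for $s_n(z)=1-z/M$ with $M$ large and $R$ fixed in $(1,M)$, the left side is $\log M$ while the right side is $\log(1+R/M)\to 0$. Jensen at radius $R$ controls $\sum_{|w_j|<R}\log(R/|w_j|)$, not the tail of large moduli, and the tail cannot be bounded by the maximum of $|s_n|$ on a fixed circle without additional input. The paper handles this direction by Van Vleck's theorem (proof of (\ref{eq:some mass before 1/G}) in Section~\ref{sect:proof of gauge and convergence of probabilities}), which is a genuinely different mechanism: it directly produces a definite proportion of large zeros from a small value of $A_n(\gamma)$, bypassing the need to control $\sum\log|w_j|$ at all.
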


We shall derive this theorem from the following:

\begin{theorem}\label{theor:gauge and convergence of probabilities}
Let $f$ be a power series in $\F$ with gauge $G$.\newline Then
\begin{equation}\label{eq:no mass beyond 1/G}
\liminf\limits_{n \to \infty} F_n(T)\ge 1-\frac{\ln(1/G)}{\ln(T)}>0\,, \quad \mbox{for each $T>1/G$}\, .
\end{equation}
And also,
\begin{equation}\label{eq:some mass before 1/G}
\liminf_{n \to \infty} F_n(T)<1\,, \quad \mbox{for each $T<1/G$}\, .
\end{equation}
\end{theorem}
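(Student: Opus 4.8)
The plan is to read off both estimates from Jensen's formula — equivalently, from the identity relating the zero–counting function of a polynomial to the logarithmic mean of its modulus on circles — applied both to $s_n$ and to its reversed polynomial $\widetilde{s}_n(z):=z^{d_n}s_n(1/z)$, where $d_n=\mathrm{deg}\,s_n$. Throughout we may assume $a_0\neq 0$ (divide out the initial power of $z$; this changes neither $F_n$ asymptotically nor $G$), for \eqref{eq:no mass beyond 1/G} we may assume $G>0$ (else there is no $T>1/G$), and for \eqref{eq:some mass before 1/G} we may assume $T\ge 1$ (for $T<1$, Hurwitz already gives $F_n(T)\to 0$). The single elementary tool used repeatedly is that for a polynomial $p(z)=\sum_k c_k z^k$ of degree $\delta$,
\[
\frac{1}{2\pi}\int_0^{2\pi}\log\bigl|p(re^{\imath\vartheta})\bigr|\,d\vartheta \;\ge\; \log\frac{|c_m|\,r^{m}}{\binom{\delta}{m}}\qquad(0\le m\le\delta),
\]
which follows from Jensen's formula together with $|c_m|\le|c_\delta|\binom{\delta}{m}\prod_{i=1}^{\delta-m}|z_i^{\ast}|$ (moduli of the zeros in decreasing order), complemented by the trivial upper bound $\frac{1}{2\pi}\int\log|p(re^{\imath\vartheta})|\,d\vartheta\le\log\bigl((\delta+1)\max_k|c_k|r^k\bigr)$.

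\emph{Proof of \eqref{eq:no mass beyond 1/G}.} Since $a_0\neq0$ the zeros of $\widetilde{s}_n$ are exactly the reciprocals of the finite zeros of $s_n$, so $n\bigl(1-F_n(T)\bigr)$ equals $(n-d_n)$ plus the number $\widehat{n}(1/T)$ of zeros of $\widetilde{s}_n$ in $|z|<1/T$. Fix $\gamma\in(0,1)$. Because $L(\gamma)\ge G>0$ we have $A_n(\gamma)\ge(G-\epsilon)^n>0$ for large $n$, hence $d_n\ge(1-\gamma)n$ and $n-d_n\le\gamma n$. Applying Jensen to $\widetilde{s}_n$ between the radii $1/T$ and $1$ gives $\widehat{n}(1/T)\log T\le \frac{1}{2\pi}\int\log|\widetilde{s}_n(e^{\imath\vartheta})|\,d\vartheta-\frac{1}{2\pi}\int\log|\widetilde{s}_n(T^{-1}e^{\imath\vartheta})|\,d\vartheta$. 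The first integral is $\le\log\bigl((n+1)\max_k|a_k|\bigr)\le n\log(1+\epsilon)+O(\log n)$ because $f$ has radius of convergence $1$. For the second integral apply the displayed inequality with $p=\widetilde{s}_n$, $r=1/T$, and $m$ the index realizing $A_n(\gamma)$ — so that the $z^m$–coefficient of $\widetilde{s}_n$ is $a_{d_n-m}$ with $|a_{d_n-m}|=A_n(\gamma)$ and $0\le m\le d_n-(1-\gamma)n\le\gamma n$; it is $\ge\log A_n(\gamma)-m\log T-\log\binom{d_n}{m}\ge n\log(G-\epsilon)-\gamma n\log T-n\,\eta(\gamma)+O(\log n)$, where $\eta(\gamma):=\limsup_n\frac1n\log\binom{n}{\lfloor\gamma n\rfloor}\to0$ as $\gamma\downarrow0$. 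Combining, $1-F_n(T)\le\gamma+\dfrac{\log\!\bigl(\tfrac{1+\epsilon}{G-\epsilon}\bigr)+\gamma\log T+\eta(\gamma)}{\log T}+o(1)$; letting first $\epsilon\downarrow0$ and then $\gamma\downarrow0$ yields $\limsup_n\bigl(1-F_n(T)\bigr)\le\ln(1/G)/\ln T$, which is $<1$ exactly when $T>1/G$.

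\emph{Proof of \eqref{eq:some mass before 1/G}.} I would argue by contraposition: assume $F_n(T)\to1$ and deduce $G\ge1/T$. Then $g_n:=n-d_n=o(n)$ and $p_n:=\#\{w\in\Z_n:|w|>T\}=o(n)$, and writing $s_n=a_{d_n}\prod_i(z-w_i)$ one has, for each fixed $\gamma$, $A_n(\gamma)=|a_{d_n}|\max_{0\le j\le\gamma n-g_n}|e_j(w_1,\dots,w_{d_n})|$. Bound $|a_{d_n}|=|a_0|/\prod_i|w_i|\ge|a_0|\,T^{-d_n}\big/\!\prod_{|w_i|>T}(|w_i|/T)$, and lower–bound $\max_{0\le j\le J}|e_j(w)|$ (with $J=\gamma n-g_n\ge p_n$) by evaluating the monic polynomial $\prod_i(z-w_i)$ (or its reverse) on a circle of suitably chosen radius, comparing its maximum modulus there both to the geometric mean of its modulus — Jensen's formula, which supplies precisely the factor $\prod_{|w_i|>T}(|w_i|/T)$ — and to its Fourier coefficients $\pm e_j(w)$, the radius being chosen so that this maximum is attained at an index $\le J$. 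The offending factor cancels and one is left with $A_n(\gamma)\ge|a_0|\,T^{-n}/\mathrm{poly}(n)$, whence $L(\gamma)=\liminf_n\sqrt[n]{A_n(\gamma)}\ge1/T$ for every $\gamma\in(0,1)$, i.e. $G=\inf_\gamma L(\gamma)\ge1/T$, contradicting $T<1/G$.

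The hard part will be this last lower bound in \eqref{eq:some mass before 1/G}: although $F_n(T)\to1$ does force $\prod_i|w_i|$ to be exponentially large, this largeness may be carried by a vanishingly small proportion of \emph{very} large zeros, and such zeros can make individual high–index coefficients of $s_n$ (individual symmetric functions $e_j(w)$) exponentially small by cancellation. Getting around this requires choosing both the evaluation radius and the index $j$ with care, so that no single monomial in $e_j(w)$ is allowed to dominate and cancel; that is where the genuine work lies. By contrast \eqref{eq:no mass beyond 1/G} is essentially bookkeeping once the reversed–polynomial reformulation and the two Jensen estimates are set up — the only delicacy being to send $\gamma\downarrow0$ so that both the entropy term $\eta(\gamma)$ and the contribution $n-d_n\le\gamma n$ of the zeros at $\infty$ wash out.
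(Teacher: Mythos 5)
Your argument for \eqref{eq:no mass beyond 1/G} is correct and essentially parallel to the paper's: your coefficient inequality $|c_m|\le |c_\delta|\binom{\delta}{m}\prod_{i=1}^{\delta-m}|z_i^*|$ is exactly the Vi\`ete bound \eqref{eq:bounds1}--\eqref{eq:bounds2}, and the entropy control of $\binom{d_n}{m}$ is \eqref{eq:binomial bounds}; you merely route the bookkeeping through Jensen's formula between the radii $1/T$ and $1$ for the reversed section, where the paper instead uses the product-of-zeros identity together with the Hurwitz constant $K_t$ for the small zeros. That half is fine.

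The genuine gap is in \eqref{eq:some mass before 1/G}: the decisive step — the lower bound $\max_{0\le j\le J}|e_j(w)|\gtrsim \prod_{|w_i|>T}(|w_i|/T)$ when the number $p_n$ of large zeros is at most $J$ — is stated but not proved, and you say yourself that "that is where the genuine work lies." As written, evaluating $\prod_i(1-w_iz)$ on a circle and comparing with Jensen only controls $\sum_{j\le d_n}|e_j|r^j$, i.e.\ the maximum over \emph{all} coefficient indices, and nothing in the sketch prevents that maximum from being carried by indices $j>J$; the promised "suitably chosen radius" is never produced, nor is the mechanism that excludes the high indices. This missing statement is precisely a Van Vleck--type theorem, which is how the paper closes the argument: inequality \eqref{eq:vanvleck bound}, applied to the reversed companion polynomial of $s_n$ along a subsequence where $\sqrt[n]{A_n(\gamma)}\,e^{H(\gamma)}\le G+\varepsilon$, forces at least $\lfloor\gamma n\rfloor+1$ zeros of modulus exceeding $1/(G+\varepsilon)>T$, whence $\liminf_n F_n(T)\le 1-\gamma<1$. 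Your contrapositive can in fact be completed in the spirit you indicate — take $r=\theta/T$ with $\theta=\theta(\gamma)$ small, use Jensen for the lower bound $\max_{|z|=r}\prod_i|1-w_iz|\ge\theta^{p_n}\prod_{|w_i|>T}(|w_i|/T)$, and use the Vi\`ete upper bounds $|e_j|\le\binom{d_n}{j}\cdot(\hbox{product of the $j$ largest moduli})$ to show that for $j>J$ one has $|e_j|r^j\le\binom{d_n}{j}\theta^{\,j}\prod_{|w_i|>T}(|w_i|/T)$, which is beaten by choosing $\theta$ with $2\theta^{\gamma/2}<1$; note also that you do not need $L(\gamma)\ge 1/T$ for every $\gamma$, an entropy-degraded bound suffices since $\gamma\downarrow0$ at the end — but none of this quantitative work is in the proposal, so the second half remains a plan rather than a proof.
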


Recall, in addition,  that for any $t<1$ we always have that $\lim_{n \to \infty} F_n(t)=0$.

\smallskip

Theorem \ref{theor:CarlsonBourion} may be derived from Theorem
\ref{theor:gauge and convergence of probabilities} as follows:

\indent \quad a) If $G=1$,  equation \eqref{eq:no mass beyond 1/G}
tells that $\lim_{n\to \infty} F_n(T)=1$ for each $T>1$ and then
Theorem \ref{theor:JentzschSzego}, ii) says
that $\mu_n$ converges to $\Lambda$, and, so, $f \in \S$.

\indent \quad b) Equation \eqref{eq:some mass before 1/G} combined
with a diagonal argument implies that if $G<1$, a subsequence of the
$\rho_n$ converges to a probability measure whose essential support
reaches $1/G$. In particular, the  $\rho_n$ does not
converge to $\delta_1$ and $f \notin \S$.

\smallskip

We shall give a proof of  Theorem \ref{theor:gauge and convergence
of probabilities} in Section \ref{sect:proof of gauge and
convergence of probabilities}.

\section{Coefficients and zeros of polynomials}\label{sect:coefficients and zeros}

Next  we collect  a number of general results connecting
coefficients and zeros of polynomials. We also give proofs, based on
those connections and to be used later on, of Theorems~\ref{theor:JentzschSzego} and \ref{theor:Szego}.

For a polynomial $P$ and integer $n \ge \deg(P)$, we write $P$ as $P(z)=\sum_{k=0}^n b_k z^k$, where $b_k=0$ for $\deg(P) <k\le n$. We let $\Z_{(P,n)}$
denote its zero (multi-)set, maintaining the convention that if $\deg(P)<n$, then $P$ has a zero of
multiplicity  $n-\deg(P)$ at $\infty_{\scriptscriptstyle \C}$.

For each $t \ge 0$, we denote by $F_{(P,n)}(t)$ the proportion (with respect
to $n$) of the  zeros of~$P$ within the disk $\{w\in \C: |w|\le t\}$. Notice that
$\lim_{t\to \infty} F_{(P,n)}(t)=\deg(P)/n$.

We shall frequently appeal to the \textit{reversed companion  polynomial} $Q$ of $P$ \textit{with respect} to~$n$:
$$
Q_{(P,n)}(z)=z^n P(1/z)=\sum_{k=0}^n b_{n-k} z^k\, ;
$$
the zeros of $Q_{(P,n)}$ are the reciprocals of the zeros of $P$.

Furthermore, we order the zeros of $P$ according to their modulus and denote them by $w_1, w_2, \ldots, w_n$:
$$
|w_1|\le |w_2|\le \dots\le |w_n|\, ,
$$
keeping in mind that the last $n-\deg(P)$ of those are $=\infty_{\scriptscriptstyle \C}$.

\subsection{Jensen's formula} For $n \ge \deg(P)$, an application of Jensen's formula to
both $P$ and its reversed companion $Q_{(P,n)}$ inside the unit disk gives
that
$$
\sum_{w \in \Z_{(P,n)}} \big|\ln|w|\big|=\frac{1}{2\pi}\int_0^{2\pi} \ln \frac{|P(e^{\imath \vartheta})|^2}{|b_0||b_n|} d\vartheta\, .
$$
Now, for any $T>1$ we have
$$\begin{aligned}
\frac{1}{n}\sum_{w \in \Z_{(P,n)}} \big|\ln|w|\big|&\ge \frac{1}{n}\sum_{\substack{w \in \Z_{(P,n)};\\|w|>T}} \big|\ln|w|\big|+\frac{1}{n}\sum_{\substack{w \in \Z_{(P,n)};\\|w|<1/T}} \big|\ln|w|\big|\\[3pt]&\ge (\ln T) (1-F_{(P,n)}(T))+(\ln T) F_{(P,n)}(1/T)\, ,
\end{aligned}
$$
and, therefore,
\begin{equation}\label{eq:weaktype jensen}
\ln(T)\,  \big(1-F_{(P,n)}(T)+F_{(P,n)}(1/T)\big)  \le \frac{1}{2\pi}\int_0^{2\pi}\ln \frac{\sqrt[n]{|P(e^{\imath \vartheta})|^2}}{\sqrt[n]{|b_0|}\sqrt[n]{|b_n|}} d\vartheta\, , \quad \mbox{for all $T>1$}\, .
\end{equation}

\

This inequality \eqref{eq:weaktype jensen} readily gives a \textbf{proof of Theorem \ref{theor:JentzschSzego} {i)} and of Theorem \ref{theor:Szego}}. Consider $f \in \F$. Assume without loss of generality that $a_0=1$. Apply \eqref{eq:weaktype jensen} to the partial sum $s_n$ to get
\begin{equation}
\label{eq:weaktype jensen for sections}
\ln(T)\,   \big(1-F_n(T)+F_n(1/T)\big)  \le \frac{1}{2\pi}\int_0^{2\pi}\ln \frac{\sqrt[n]{|s_n(e^{\imath \vartheta})|^2}}{\sqrt[n]{|a_n|}} d\vartheta\, , \quad \mbox{for all $T>1$}\, .
\end{equation}

Since the radius of convergence  of $f$ is 1, one has that
$$
\limsup_{n\to \infty} \max_{|z|\le R} \sqrt[n]{|s_n(z)|}=R\, , \quad \mbox{for any $R \ge 1$}.
$$
Let $(n_k)_{k \ge 1}$ be any  increasing sequence such that $\lim_{k \to \infty} |a_{n_k}|^{1/n_k}=1$.  Since, for each $T >1$, one has $\lim_{n\to \infty} F_n(1/T)=0$, we obtain from \eqref{eq:weaktype jensen for sections} that
$$
\ln(T)\,   \limsup_{k \to \infty} (1-F_{n_k}(T)) \le 0\, , \quad \mbox{for any $T>1$}\, ,
$$
and so $\lim_{k \to \infty} F_{n_k}(T)=1$, for any $T>1$. This proves both  Theorem \ref{theor:JentzschSzego}\,i), and also  Theorem \ref{theor:Szego}.

Notice that actually the argument gives the following  general inequality:
\begin{equation}\label{eq:prior Carlson}
\ln(T)\,   \big(1-\liminf\limits_{n\to \infty}F_n(T)\big) \le -\ln
\big({\liminf\limits_{n\to \infty}\sqrt[n]{|a_n|}}\big) \, , \quad
\mbox{for any $T>1$}\, .
\end{equation}
Cf. \cite{Granville} and \cite{Hughes}.

\subsection{Coefficients as symmetric functions of the zeros}

For a polynomial $P$ and integer $n \ge \deg(P)$, if $b_0\neq 0$, the product of the zeros of $P$ and the coefficients of $P$ are related by
\begin{equation}\label{eq:product of zeros}
\frac{|b_0|}{|b_n|}=\prod_{w \in \Z_{(P,n)}} |w|\, .
\end{equation}

\smallskip

This identity readily gives (another) \textbf{proof of Theorem
\ref{theor:Szego}}. For $f \in \F$, assume without loss of generality that $f(0)=1$.
Fix $t <1$. From Hurwitz's theorem and the fact that $f(0)\neq 0$,
we obtain a constant $K_t>0$ (which depends on $t$ but not on $n$)
such that
$$
\prod_{w \in \Z_n: |w|\le t} |w|\ge K_t\, .
$$

For each $T>1$, after classifying the roots $w$ as $|w|\le t$,
$t<|w|\le T$, and $|w|>T$,  we may bound
$$
\frac{1}{|a_n|}\ge K_t \ t^{n(F_n(T)-F_n(t))} \ T^{n(1-F_n(T))}\ge K_t \ t^{n} \ T^{n(1-F_n(T))}\, .
$$
Taking  $n$-th roots and then  limits as $n \to \infty$, we conclude that if $\lim_{n \to \infty} \sqrt[n]{|a_n|}=1$ then
$$
 T^{\liminf\limits_{n\to \infty}F_n(T)}\ge t T\, .
$$
Since this is valid for any $t<1$ and since $T>1$, we deduce  that $\liminf_{n\to \infty}F_n(T)\ge 1$ and thus that $\lim_{n\to \infty}F_n(T)= 1$. This is Szeg\H{o}'s own argument in \cite{Szego}  to prove  Theorem \ref{theor:Szego}.

\smallskip

As we shall see, equation \eqref{eq:no mass beyond 1/G} of Theorem
\ref{theor:gauge and convergence of probabilities} will follow from
a variation of Szeg\H{o}'s argument but involving more coefficients
and not just $a_n$ and equation \eqref{eq:product of zeros}.

\smallskip

 From the expression of the coefficients of a polynomial $P$ as symmetric
 functions of its zeros (Vi\`ete's formulas) one  obtains, for $n \ge \deg(P)$, the inequality
\begin{equation}\label{eq:bounds1}
\frac{|b_{k}|}{|b_n|}\le \binom{n}{k}\prod_{j=k+1}^n|w_j|\, , \quad 0 \le k \le n\, ,
\end{equation}
with the convention that an empty product is 1. Upon considering the
reversed companion  polynomial $Q_{(P,n)}(z)$, one obtains the inequality
\begin{equation}\label{eq:bounds2}
\prod_{j=1}^k |w_j|\le \binom{n}{k}\frac{|b_{0}|}{|b_k|}\, , \quad 0 \le k \le n\, .
\end{equation}

\smallskip

To control the binomial coefficients appearing in \eqref{eq:bounds2}
we shall use the known elementary bound
\begin{equation}\label{eq:binomial bounds}
\binom{n}{k}\le e^{n \, H(k/n)}\,,  \quad \mbox{for} \ 0 \le k \le n \ \mbox{and} \ n \ge 1\, ,
\end{equation}
where  $H$ denotes the entropy function: $H(x)=-\big(x
\ln(x)+(1-x)\ln(1-x)\big)$ for $x \in [0,1]$. Notice that $H(x)=0$
if $x=0$ or $x=1$, and that $H$ decreases as $x$ goes from $1/2$ to
$1$.

\subsection{A proof of Szeg\H{o}'s Theorem \ref{theor:JentzschSzego}, ii).} What follows is a slight simplification of  Szeg\H{o}'s own argument for Theorem \ref{theor:JentzschSzego}, ii).

We assume with no loss of generality that $a_0=1$. Since $f(0)=a_0\neq 0$, we may fix $r>0$ and integer $N\ge 1$ such that no root of $s_n$ lies in the disk $\{|z|<r\}$.

For $z\in\C\setminus\{0\}$, we write $z/|z|=e^{\imath \theta(z)}$ with $\theta(z)\in[0,2\pi)$.

We shall prove that
\begin{equation}\label{eq:proof of szegos}
\lim_{k \to \infty} \frac{1}{n_k}\sum_{w \in \widetilde{\mathcal{Z}}_{n_k}} e^{-\imath m \theta(w)}=0\, ,\quad \mbox{for any integer $m \ge 1$}\, ,
\end{equation}
where $\widetilde{\mathcal{Z}}_n$ means $\mathcal{Z}_n$ with $\infty_\C$ excluded.
Since $\rho_{n_k} \to \delta_1$ as $k\to \infty$, the conclusion of Theorem \ref{theor:JentzschSzego}, ii),  will follow from \eqref{eq:proof of szegos} combined with  Weierstrass approximation theorem.

\smallskip

To prove \eqref{eq:proof of szegos}, fix an integer $m \ge 1$.

Let $\sigma_n$ denote the reversed companion polynomial $
\sigma_n(z)=\sum_{k=0}^n a_{n-k} z^k
$ of the partial sum $s_n$ with respect to $n$.
An application of Newton's identities to $\sigma_n$ gives  that
$$
\sum_{w \in \mathcal{Z}_n}\frac{1}{w^m}=\sum_{w \in \widetilde{\mathcal{Z}}_n}\frac{1}{w^m}=\Psi_m(a_1, a_2, \ldots, a_m)\, ,
$$
where $\Psi_m$ is a certain function defined in $\C^m$. Consequently,
$$
\lim_{n \to \infty} \frac{1}{n}\sum_{w \in \mathcal{Z}_n}\frac{1}{w^m}=0\,  .
$$

\smallskip

Now, for $n \ge N$, write
$$
\frac{1}{n} \sum_{w \in \widetilde{\mathcal{Z}}_n}e^{-\imath m \theta(w)}=\frac{1}{n} \sum_{w \in \mathcal{Z}_n}\frac{1}{w^m}+\frac{1}{n}\sum_{w \in \widetilde{\mathcal{Z}}_n} e^{-\imath m \theta(w)}(1-|w|^{-m})\, .
$$
For $T>1$, we may bound the last sum in the expression above as
$$\begin{aligned}
\Big|\frac{1}{n}\sum_{w \in \widetilde{\mathcal{Z}}_n} e^{-\imath m \theta(w)}(1-|w|^{-m})\Big|&\le \frac{1}{n}\sum_{w \in \widetilde{\mathcal{Z}}_n} |1-|w|^{-m}|\\&\le
F_n(1/T) (r^{-m}-1)+(T^m-1)\\&+ (1-F_n(T))+(1-T^{-m})\, .
\end{aligned}
$$
Since  $\lim_{n \to \infty} F_n(1/T)=0$ and, by hypothesis, $\lim_{k \to \infty} (1- F_{n_k}(T))=0$, we conclude that
$$
\limsup_{k \to \infty} \Big|\frac{1}{n_k} \sum_{w \in \widetilde{\mathcal{Z}}_{n_k}}e^{-\imath m \theta(w)}\Big|\le T^m-T^{-m}\, , \quad \mbox{for any $T>1$}\,,
$$
which gives \eqref{eq:proof of szegos}.

\subsection{Cauchy's and Van Vleck's bounds}

These are classical bounds for the location of zeros of a polynomial
$P$ in terms of (all or some of) its coefficients. Consult
\cite{Marden}, chapters VII and VIII, or \cite{Henrici}, chapter 6,
and also the original paper \cite{VanVleck} of Van Vleck.

The bound of Cauchy asserts that all the zeros of the polynomial $P$ lie
in $\{w\in \C: |w|\le C_P\}$, where $C_P$ is the unique  positive
root of
$$
|b_n|x^n=\sum_{k=0}^{n-1} |b_k| x^k\, .
$$
We understand that if $b_n=0$,  then $C_P=+\infty$.


\smallskip

Upon considering the reversed companion polynomial $Q_{(P,n)}$ one observes
that all the roots of $P$ lie  in  $\{w\in \C: |w|\ge c_P\}$ where
$c_P$ is the unique positive root of the equation in~$y$:
$$
|b_0|=\sum_{k=1}^{n} |b_{k}| y^k\, .
$$
Notice that
$$
|b_0|\le n \Big(\max_{1\le k\le n} |b_k|\Big) \max(1, c_P)^n\,.
$$

\smallskip

\textbf{Van Vleck's bounds} assert that for  $1 \le m \le n$ at
least $m$ zeros  of $P$ lie in the disk $\{w: |w|\le V^m_P\}$,  where
$V^m_P$ is the unique  positive root of
$$
|b_n|x^n=\sum_{j=0}^{m-1} \binom{n-j-1}{m-j-1} |b_j|x^j\, .
$$
The case $m=n$ is Cauchy's bound: $V^n_P=C_P$.
Again, we understand that $V^m_P=+\infty$ if $b_n=0$.

Upon applying these bounds to the reversed companion  polynomial
$Q_{(P,n)}$ with respect to $n$ we deduce that,  if $1 \le m \le n$, the polynomial $P$
has $m$ roots in $\{w\in \C: |w| \ge v^m_P\}$,  where $v^m_P$ is the
unique positive root of the equation
$$
|b_0|=\sum_{k=n-m+1}^n \binom{k-1}{k-(n-m)-1}|b_k|y^k\, .
$$

Using that
$$
\sum_{k=n-m+1}^n \binom{k-1}{k-(n-m)-1}=\binom{n}{m-1}\, ,
$$
we deduce that
\begin{equation}\label{eq:vanvleck bound}
|b_0|\le \binom{n}{m-1} \Big(\max_{n-m+1\le k\le n}|b_k|\Big)\, \max(1, v^m_P)^n\, .
\end{equation}

\medskip

\section{Proof of theorem \ref{theor:gauge and convergence of probabilities}}\label{sect:proof of gauge and convergence of probabilities}

Let $f\in \F$. We assume with no loss of generality that $a_0=1$.

\smallskip

First we deal with the \textbf{proof of inequality  \eqref{eq:no mass
beyond 1/G}}.

For each $n \ge 1$, present the zeros of $s_n$ in ascending order of
modulus as $|w^{(n)}_1|\le \dots \le |w^{(n)}_n|$. Recall that if
the degree of $s_n$ is $m\le n$, we, conveniently and
conventionally, understand that the last $n-m$ of these zeros are
$\infty_{\scriptscriptstyle \C}$.

The bounds \eqref{eq:bounds2} translate into
$$
\prod_{j=1}^k |w^{(n)}_j|\le \binom{n}{k}\frac{1}{|a_k|}\, , \quad \mbox{for} \ 0 \le k \le n \ \mbox{and}\  n \ge 1\, .
$$

\

Fix $T>1$ and $\gamma \le 1/2$. Fix also $t<1$, which later on will
tend to 1. From Szeg\H{o}'s argument of Section
\ref{sect:coefficients and zeros}, maintaining the notation therein,
we obtain that
$$
K_t \, t^n \, T^{k-nF_n(T)}\le \binom{n}{k} \frac{1}{|a_k|}\, , \quad \mbox{for} \ 0 \le k \le n\ \mbox{and}\  n \ge 1\, .
$$
If we restrict $k$ to the range $(1-\gamma) \, n\le k \le n$ we deduce,
using the bound \eqref{eq:binomial bounds}, that
$$
K_t\, t^n\, T^{n(1-\gamma-F_n(T))}\le e^{n
H(1-\gamma)}\frac{1}{|a_k|}\, , \quad \mbox{for $(1-\gamma) \, n\le k \le n$ and $n \ge 1$}\, .
$$
and, then, that
$$
K_t\, t^n\, T^{n(1-\gamma-F_n(T))}\le e^{n
H(1-\gamma)}\frac{1}{A_n(\gamma)}\, , \quad \mbox{$n \ge 1$}\, .
$$
Upon extracting $n$-th roots, letting first $n \to \infty$, and then
letting $t \uparrow 1$, we deduce
$$
T^{(1-\gamma-\liminf\limits_{n \to \infty}F_n(T))}\le e^{H(1-\gamma)}\frac{1}{L(\gamma)}\, .
$$
Letting $\gamma \downarrow 0$, and using  that $H(1)=0$, we deduce  that
$$
T^{1-\liminf\limits_{n \to \infty}F_n(T)}\le \frac{1}{G}\, ,
$$
or, as claimed,
$$
\liminf\limits_{n \to \infty}F_n(T)\ge 1- \frac{\ln(1/G)}{\ln(T)}\, .
$$
(Compare this last inequality with  inequality \eqref{eq:prior
Carlson}.)

\smallskip

Next, we turn to the \textbf{verification of inequality \eqref{eq:some
mass before 1/G}}.

We assume that $G<1$, since otherwise the result is trivially true,
and we let $1<T<1/G$. Fix $\varepsilon >0$ so that
$(G+\varepsilon)T<1$. Since $H(0)=0$, we may choose, and fix,  $\gamma\in
(0,1/2)$ so that
$$
L(\gamma) \, e^{H(\gamma)}\le G+\varepsilon/2\, .
$$
For an infinite subset $\mathcal{N}$ of $\N$  one has that
\begin{equation}
\label{eq:bound G plus eps}
  \sqrt[n]{A_n(\gamma)} \ e^{H(\gamma)} \le
G+\varepsilon\, , \quad \mbox{for $n \in \mathcal{N}$}\, .
\end{equation}

\

For $n \in \mathcal{N}$, let $m_n=\lfloor \gamma n \rfloor +1$.
The Van Vleck's bounds, equation \eqref{eq:vanvleck bound}, applied to
$s_n$ gives that $s_n$ has at least $m_n$ roots with modulus no less
than $v_n$, where $v_n$ satisfies
\begin{equation}
\label{eq:lower bound An} 1\le \binom{n}{\lfloor \gamma n \rfloor} \
A_n(\gamma) \max(1, v_n)^n\, , \quad \mbox{for $n \in
\mathcal{N}$}\, .
\end{equation}
Observe that
$$
F_n(v_n)\le \frac{n-m_n}{n}\le 1-\gamma\, ,  \quad \mbox{for $n \in
\mathcal{N}$}\,.
$$

From the bound  \eqref{eq:binomial bounds} and inequality
\eqref{eq:bound G plus eps} above, we deduce from  inequality \eqref{eq:lower bound An} that
$$
1\le (G+\varepsilon) \max(1, v_n)\, , \quad \mbox{for $n \in \mathcal{N}$}\, .
$$
Since $G+\varepsilon<1$, this means that $v_n >1$ and, in fact, that
$$
\frac{1}{G+\varepsilon}<v_n\, , \quad \mbox{for $n \in \mathcal{N}$}\, .
$$
Therefore
$$
F_n(T)\le 1-\gamma\, , \quad \mbox{for $n \in \mathcal{N}$}\, ,
$$
and consequently $$\liminf_{n \to \infty} F_n(T)\le 1-\gamma <1\, .$$
This completes the proof of Theorem \ref{theor:gauge and convergence
of probabilities}. \hfill $\Box$

\

It should be mentioned that the proof above of equation \eqref{eq:no
mass beyond 1/G} of Theorem \ref{theor:gauge and convergence of
probabilities} is a direct adaptation of Szeg\H{o}'s own  argument
in \cite{Szego} to prove his Theorem \ref{theor:Szego}, which we
have discussed in Section \ref{sect:coefficients and zeros}, while
the proof of \eqref{eq:some mass before 1/G} of Theorem
\ref{theor:gauge and convergence of probabilities} is  a refinement
of a suggestion of P. Tur\'an which appears as a \textit{note added in
proof} in the paper \cite{ErdosFried} of Erd\H{o}s and Fried.

\begin{remark}\label{rem:infinite Ostrowsky gaps}\rm
The proof above of Theorem \ref{theor:gauge and convergence of
probabilities} actually gives that if $\gamma \in (0,1)$  then
$$
\liminf_{n \to \infty} F_n(T)\le 1-\gamma\, , \quad \mbox{for any $T<\big(L(\gamma)e^{H(\gamma)}\big)^{-1}$}\, .
$$
In the argument above we have just  used the case $\gamma$ close to $0$, but if we let $\gamma \uparrow 1$ we obtain:
$$
\liminf_{n \to \infty} F_n(T)=0\, , \quad \mbox{for any $T<\big(\lim_{\gamma \uparrow 1}L(\gamma)\big)^{-1}$}\, .
$$
Of course, this is only relevant if $\lim_{\gamma \uparrow 1}L(\gamma)<1$. Power series for which this occurs, like   $\sum_{n=0}^\infty z^{n!}$, are said to have \textit{infinite} Ostrowsky gaps, see {\upshape \cite{ErdosFried}} Theorem II.
\end{remark}

\section{Random power series}\label{sect:random power series}

Next we turn to random power series. Our aim is to analyze, using
the deterministic machinery of the previous sections, the
probability that such a random power series is a Szeg\H{o} power
series. We point out to \cite{Barucha} and \cite{Kahane} as general
references on random polynomials and on random power series.

\subsection{The iid case} To start with, let $X$ be any non null complex valued random variable. Consider a sequence $(X_n)_{n\ge 0}$ of completely independent clones of $X$ in a certain probability space $(\Omega, \P)$. For each $\omega\in \Omega$, let $f_\omega$ denote the power series
$$
f_\omega(z)=\sum_{k=0}^\infty X_k(\omega) z^k\, .
$$
This model of random power series is usually called \textit{Kac ensemble},
particularly so if $X$ is a gaussian variable.

The radius of convergence of $f_\omega$ is a random variable, but it
turns out to be almost surely constant;  actually, the
Borel--Cantelli lemma gives directly the following well-known
dichotomy.
\begin{lemma}\label{lemma:random radius}\mbox{}

\smallskip

\indent If\/ $\E(\ln^+|X|)<+\infty$, then $\limsup_{n \to \infty} \sqrt[n]{|X_n|}=1$, almost surely.
\\[6pt]
\indent    If\/ $\E(\ln^+|X|)=+\infty$, then $\limsup_{n \to \infty} \sqrt[n]{|X_n|}=+\infty$, almost surely.
\end{lemma}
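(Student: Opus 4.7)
My plan is to prove both parts by an application of the two Borel--Cantelli lemmas, combined with the standard identity that for a nonnegative random variable $Y$ one has $\E(Y)<\infty$ if and only if $\sum_{n\ge 1}\P(Y>cn)<\infty$ for some (equivalently every) constant $c>0$. I apply this to $Y=\ln^+|X|$ with $c=\ln a$, so that
$$\E(\ln^+|X|)<\infty \iff \sum_{n\ge 1}\P\big(|X_n|>a^n\big)<\infty \quad \text{for every }a>1,$$
and $\E(\ln^+|X|)=\infty$ if and only if the series diverges for every $a>1$ (note that $\ln a>0$ makes this uniform).

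For the first dichotomy ($\E(\ln^+|X|)<\infty$), the convergence side of Borel--Cantelli applied to each $a=1+1/k$ gives, off a null set, that $|X_n|\le (1+1/k)^n$ for all sufficiently large $n$; taking $k\to\infty$ along a countable sequence yields $\limsup_n\sqrt[n]{|X_n|}\le 1$ almost surely. For the matching lower bound I invoke that $X$ is non null, so there exists $\varepsilon>0$ with $\P(|X|>\varepsilon)>0$; independence of the $X_n$ and the second Borel--Cantelli lemma then give $|X_n|>\varepsilon$ infinitely often almost surely, whence $\limsup_n\sqrt[n]{|X_n|}\ge\lim_n \varepsilon^{1/n}=1$ almost surely.

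For the second dichotomy ($\E(\ln^+|X|)=+\infty$), I fix an arbitrary $a>1$; the divergence $\sum_n\P(|X_n|>a^n)=\infty$ together with independence and the second Borel--Cantelli lemma shows that $|X_n|>a^n$ happens for infinitely many $n$ almost surely, hence $\limsup_n\sqrt[n]{|X_n|}\ge a$ almost surely. Intersecting the resulting full-measure events over a countable sequence $a_k\uparrow\infty$ yields $\limsup_n\sqrt[n]{|X_n|}=+\infty$ almost surely.

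There is no substantial obstacle here; the only mildly delicate point is making sure the exceptional null sets are handled along a countable sequence (of $k$ or of $a_k$) before passing to the limit, so that the almost sure conclusion is genuine and not a pointwise-in-$a$ statement. The results of Lemma~\ref{lemma:random radius} are then just the translation of these two statements through $R=1/\limsup_n\sqrt[n]{|X_n|}$.
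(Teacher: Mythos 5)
Your proof is correct and follows essentially the same route as the paper's: both apply the convergence half of Borel--Cantelli (no independence needed) to the events $\{|X_n|>a^n\}$ using the tail characterization of $\E(\ln^+|X|)$, and the divergence half (with independence) to get the matching lower bounds, with the non-null assumption supplying a positive-probability lower threshold in the finite-expectation case. Your explicit attention to intersecting the full-measure events over a countable sequence of $a$'s is a good habit; the paper handles it the same way, just less verbosely.
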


In terms of the power series $f_\omega$,
this lemma means that if $\E(\ln^+|X|)<+\infty$,
then the radius of convergence of $f_\omega$ is almost surely 1,
while if $\E(\ln^+|X|)=\infty$, the radius of convergence of $f_\omega$ is almost surely 0.
In other terms,
under the condition $\E(\ln^+|X|)<+\infty$, the random power series $f_\omega$ is almost surely in $\F$.

\smallskip

Notice that if $\E(|\ln|X||)<+\infty$ then
almost surely $\lim_{n \to \infty} \sqrt[n]{|X_n|}=1$, and conversely.
Thus, if $\E(|\ln|X||)<+\infty$, condition \eqref{eq:szegos condition} of
Theorem \ref{theor:Szego} holds almost surely.

\smallskip

We include a proof of lemma \ref{lemma:random radius}, since later
on we shall adapt it to the case of non identically distributed random
coefficients.

\begin{proof}
If $\E(\ln^+|X|)<+\infty$ then $\sum_{n=0}^\infty \P(|X| \ge
e^{\alpha n})< +\infty$, for all $\alpha >0$. Since the~$X_n$ are
identically distributed this, in turn, is equivalent to
$\sum_{n=0}^\infty \P(|X_n| \ge e^{\alpha n})< +\infty$. The lemma of Borel--Cantelli (no independence needed) gives then, for each $\alpha
>0$, that $\limsup_{n \to \infty}\sqrt[n]{|X_n|}\le e^{\alpha}$ almost surely, and
consequently, $\limsup_{n \to \infty}\sqrt[n]{|X_n|}\le 1$ almost
surely.

Since $X$ is non null, for some $\delta>0$ we have  that $\P(|X|\ge
\delta)>0$. Since the $X_n$ are identically distributed this implies
that $\sum_{n=0}^\infty \P(|X_n| \ge \delta)= +\infty$. Now, using
independence, the  lemma of Borel--Cantelli  gives then that
$\limsup\sqrt[n]{|X_n|}\ge 1$, almost surely.

\smallskip

If $\E(\ln^+|X|)=+\infty$, then  $\sum_{n=0}^\infty \P(|X| \ge e^{\alpha n})= +\infty$, for all $\alpha >0$. Now, independence and the lemma of Borel--Cantelli gives that $\limsup\sqrt[n]{|X_n|}\ge e^{\alpha}$ for all $\alpha>0$, and so $\limsup\sqrt[n]{|X_n|}=+\infty$ almost surely.
\end{proof}

Fix a non null random variable $X$ and, as above, let  $(X_n)_{n \ge
0}$ be a sequence of completely independent clones of $X$. For
$\gamma \in (0,1)$ and $n \ge 0$, define the random variable $$
A_n(\gamma)=\max_{(1-\gamma) n \le k \le n} |X_k|\, .
$$

\begin{lemma}\label{lemma:non null implies gauge=1} If $X$ is a non null random variable, then for each $\gamma \in (0,1)$
$$
\liminf_{n \to \infty} \sqrt[n]{A_n(\gamma)}\ge 1 \quad \mbox{almost surely}\, .
$$
\end{lemma}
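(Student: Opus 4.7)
The plan is to adapt the second half of the proof of Lemma \ref{lemma:random radius} (the part that yielded $\limsup \sqrt[n]{|X_n|} \ge 1$) to the localized maxima $A_n(\gamma)$. The key point is that asking for $A_n(\gamma) \ge \delta$ for some fixed $\delta > 0$ only requires \emph{one} of the roughly $\gamma n$ variables $X_k$ with $(1-\gamma)n \le k \le n$ to exceed $\delta$ in modulus, and the probability that \emph{none} of them does decays exponentially in $n$, which is well within reach of Borel--Cantelli.

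Concretely, since $X$ is non null, I fix $\delta > 0$ so that $p := \P(|X| \ge \delta) > 0$. Denote by $I_n$ the set of integers $k$ with $(1-\gamma) n \le k \le n$, so that $|I_n| \ge \gamma n - 1$ for all $n$. By the independence of the $X_k$ and the fact that they are identically distributed as $X$,
$$
\P\bigl(A_n(\gamma) < \delta\bigr) = \prod_{k \in I_n} \P(|X_k| < \delta) = (1-p)^{|I_n|} \le (1-p)^{\gamma n - 1}.
$$
These probabilities form a summable sequence in $n$, so the (easy half of the) Borel--Cantelli lemma gives that, almost surely, $A_n(\gamma) \ge \delta$ for all but finitely many $n$.

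On that almost sure event, $\sqrt[n]{A_n(\gamma)} \ge \delta^{1/n}$ for all large enough $n$, and since $\delta^{1/n} \to 1$, we conclude $\liminf_{n \to \infty} \sqrt[n]{A_n(\gamma)} \ge 1$ almost surely, as required. There is no real obstacle here; the only thing to be mildly careful about is counting the cardinality of $I_n$ correctly so that the tail probability is genuinely summable, and noting that no integrability hypothesis on $X$ is needed because the argument uses only that $|X|$ is not almost surely zero.
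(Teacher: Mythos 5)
Your proof is correct and uses essentially the same mechanism as the paper: the easy half of Borel--Cantelli applied to the exponentially small probability that all $\sim\gamma n$ independent variables in the window fall below a fixed threshold. The only cosmetic difference is that you keep a fixed threshold $\delta$ and invoke $\delta^{1/n}\to 1$ at the end, whereas the paper uses the shrinking threshold $(1-\varepsilon)^n$ and then lets $\varepsilon\downarrow 0$; both reduce to the same geometric series $\sum (1-p)^{\gamma n}$.
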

Notice that in Lemma \ref{lemma:non null implies gauge=1} no
assumption on $\E(\ln^+|X|)$ is required; just the trivial
assumption that $X$ is non null (and independence of the clones, of
course) implies that almost surely the sequence $(X_n(\omega))_{n
\ge 1}$ can not be too small for long stretches of $n$.

Observe also that Lemma \ref{lemma:non null implies gauge=1} does
not hold for $\gamma=0$: simply take $X$ to be a Bernoulli random
variable.

\begin{proof}
Fix $\gamma \in (0,1)$. We have to verify  that
$$
\P\big(\liminf_{n \to \infty} \sqrt[n]{A_n(\gamma)}\ge 1\big)=1\, ,
$$
or, equivalently, that, for each  $\varepsilon >0$:
$$
\P\big(\sqrt[n]{A_n(\gamma)}\le (1-\varepsilon), \, \, \mbox{infinitely many $n$}\big)=0\, ,
$$
which, in turn, by  the lemma of Borel--Cantelli (no independence
assumption needed) reduces to prove that
$$
\sum_{n=0}^\infty\P\big(\sqrt[n]{A_n(\gamma)}\le (1-\varepsilon)\big)<+\infty\, .
$$
Since each $X_n$ is a clone of $X$, all we have to show is that
$$
\sum_{n=0}^\infty\P\big(|X|\le  (1-\varepsilon)^n\big)^{\gamma n}<+\infty\, .
$$

\smallskip

Since $X$ is non null, there is $\delta >0$ such that
$\P(|X|< \delta)<1$. Now for $n \ge N=N(\delta, \varepsilon)$ one has that $(1-\varepsilon)^n < \delta$, and, consequently,
\begin{equation*}
\sum_{n\ge N}^\infty\P\big(|X|\le  (1-\varepsilon)^n\big)^{\gamma n}\le
\sum_{n\ge N}^\infty\P\big(|X|< \delta\big) ^{\gamma n }<+\infty
\, .\qedhere
\end{equation*}
\end{proof}

 \begin{theorem}
 \label{theor:Gamma 1 as}
If $X$ is a {\upshape(}non-null{\upshape)} random variable and
$\E(\ln^+|X|)<+\infty$, then almost surely the gauge $G$ of
$f_\omega$ is $1$.
 \end{theorem}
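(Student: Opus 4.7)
The plan is to combine the hypothesis $\E(\ln^+|X|)<\infty$ (via Lemma \ref{lemma:random radius}), which controls $\sqrt[n]{A_n(\gamma)}$ from above, with Lemma \ref{lemma:non null implies gauge=1}, which controls it from below, to conclude that $L(\gamma)=1$ almost surely for each fixed $\gamma\in(0,1)$, and finally to promote this to $G=1$ almost surely by exploiting the monotonicity of $\gamma\mapsto L(\gamma)$.

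First I would fix $\gamma\in(0,1)$ and argue for the upper bound. By Lemma \ref{lemma:random radius}, the assumption $\E(\ln^+|X|)<\infty$ yields $\limsup_{n\to\infty}\sqrt[n]{|X_n|}=1$ almost surely. Hence on a full-measure event, for each $\varepsilon>0$ there is $N=N(\omega,\varepsilon)$ such that $|X_k(\omega)|\le(1+\varepsilon)^k$ for all $k\ge N$. Then, for $n$ large enough that $(1-\gamma)n\ge N$, we have $A_n(\gamma)\le(1+\varepsilon)^n$, so $\sqrt[n]{A_n(\gamma)}\le 1+\varepsilon$. Letting $\varepsilon\downarrow 0$ along a countable sequence gives $\limsup_{n\to\infty}\sqrt[n]{A_n(\gamma)}\le 1$ almost surely.

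Second, for the lower bound, Lemma \ref{lemma:non null implies gauge=1} (which is purely based on non-nullity of $X$ and does not even require the integrability hypothesis) yields $\liminf_{n\to\infty}\sqrt[n]{A_n(\gamma)}\ge 1$ almost surely. Combining the two bounds, for each fixed $\gamma\in(0,1)$,
\[
\lim_{n\to\infty}\sqrt[n]{A_n(\gamma)}=1\quad\text{almost surely},
\]
so $L(\gamma)=1$ almost surely.

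Finally, to pass from \textquotedblleft$L(\gamma)=1$ a.s.\ for each $\gamma$\textquotedblright\ to \textquotedblleft$G=1$ a.s.\textquotedblright\ without running into an uncountable union of null sets, I would invoke the monotonicity observed in Section \ref{sect:gauge and index}: $\gamma\mapsto L(\gamma)$ is non-decreasing. Pick any sequence $\gamma_j\downarrow 0$; by countable subadditivity, on a single full-measure event one has $L(\gamma_j)=1$ for every $j$. Since $G=\lim_{\gamma\downarrow 0}L(\gamma)=\inf_{\gamma\in(0,1)}L(\gamma)$ and $L\le 1$, monotonicity gives $G=\lim_{j\to\infty}L(\gamma_j)=1$ on that event. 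The only delicate point is the (standard) bookkeeping with the countable collection of null sets; the main analytic input is that Lemma \ref{lemma:random radius} (which is where $\E(\ln^+|X|)<\infty$ enters) forces $\sqrt[n]{A_n(\gamma)}\to 1$ from above, while Lemma \ref{lemma:non null implies gauge=1} forces it from below.
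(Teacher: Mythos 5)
Your proof is correct and follows essentially the same route as the paper's: upper bound on $L(\gamma)$ from Lemma \ref{lemma:random radius} (radius of convergence $1$ a.s.), lower bound from Lemma \ref{lemma:non null implies gauge=1}, then pass to the gauge. You are slightly more explicit than the paper about the countable-union-of-null-sets bookkeeping and the use of monotonicity of $\gamma\mapsto L(\gamma)$, but the argument is the same.
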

\begin{proof}
The assumption $\E(\ln^+|X|)<+\infty$, implies,
by lemma \ref{lemma:random radius}, that
almost surely the radius of convergence of $f_\omega$ is 1.
And then, the hypothesis that $X$ is non null implies, by
Lemma \ref{lemma:non null implies gauge=1}, that almost surely $f_\omega$ has gauge 1.
\end{proof}

As a consequence of Theorem
\ref{theor:Gamma 1 as} and Theorem \ref{theor:CarlsonBourion} we
obtain the following theorem of Ibragimov and Zaporozhets, \cite{IbraZapo}. Consult also
\cite{Arnold}, \cite{Hughes} and \cite{ShparoShur}.
\begin{theorem}[Ibragimov--Zaporozhets]\label{theor:IZ}
For any {\upshape (}non null{\upshape )} random variable $X$ satisfying
$\E(\ln^+|X|)<+\infty$, the sequence $(\mu_n)_{n \ge 0}$ of
random probabilities converges  almost surely to the uniform probability $\Lambda$ on $\partial \D$; in other terms,
almost all power series $f_\omega$ are Szeg\H{o} power series.
\end{theorem}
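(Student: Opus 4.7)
The plan is to obtain Theorem \ref{theor:IZ} as an immediate corollary of Theorem \ref{theor:Gamma 1 as} together with the Carlson--Bourion characterization, Theorem \ref{theor:CarlsonBourion}; essentially no new probabilistic work is required at this stage.

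First I would note that under $\E(\ln^+|X|)<+\infty$ the hypothesis of Lemma \ref{lemma:random radius} is met, so $f_\omega$ almost surely has radius of convergence $1$ and lies in $\F$ on an event $\Omega_1$ of full probability. On this event, Theorem \ref{theor:Gamma 1 as} asserts that the gauge $G$ of $f_\omega$ equals $1$ almost surely; the underlying mechanism is Lemma \ref{lemma:non null implies gauge=1}, applied countably to each rational $\gamma\in(0,1)$ and combined with the a.s.\ upper bound $L(\gamma)\le 1$ that follows from the radius of convergence being $1$, then extended to all $\gamma\in(0,1)$ via monotonicity of $L$. Denote by $\Omega_2\subseteq\Omega_1$ the resulting full-probability event on which $G=1$.

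Next I would invoke Theorem \ref{theor:CarlsonBourion} pointwise on $\Omega_2$: a power series in $\F$ with gauge $1$ belongs to the Szeg\H{o} class $\S$, which by definition means that the complete sequence $(\mu_n)$ converges weakly to $\Lambda$. Performing this for every $\omega\in\Omega_2$ yields the almost sure convergence $\mu_n(f_\omega)\to\Lambda$ claimed in Theorem \ref{theor:IZ}.

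The genuine work lies upstream and is already in place by the time this theorem is reached: the sole probabilistic input is Lemma \ref{lemma:non null implies gauge=1}, whose Borel--Cantelli argument uses non-nullness of $X$ to rule out, with summable probability, windows of length $\gamma n$ in which all coefficients are simultaneously geometrically small; the deterministic heavy lifting sits in Theorem \ref{theor:CarlsonBourion}, which bridges a coefficient-level condition (gauge $1$) to equidistribution of zeros of sections. I do not expect any genuine obstacle in combining them; the only bookkeeping is a single countable intersection of null sets to pass from ``$L(\gamma)=1$ for each rational $\gamma$'' to ``$L(\gamma)=1$ for all $\gamma\in(0,1)$''.
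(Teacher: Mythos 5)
Your proposal is correct and is essentially the paper's own proof: the paper likewise deduces Theorem \ref{theor:IZ} directly by combining Theorem \ref{theor:Gamma 1 as} (gauge $1$ almost surely) with the Carlson--Bourion characterization, Theorem \ref{theor:CarlsonBourion}. The extra bookkeeping you mention (countable intersection over rational $\gamma$, monotonicity of $L$) is a fine way to justify the ``almost surely'' in Theorem \ref{theor:Gamma 1 as} and matches the intended argument.
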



\begin{remark}\rm
Under the (stronger) hypothesis  $\E\big(\big|\ln|X|\big|\big)<+\infty$, Szeg\H{o}'s condition {\upshape \eqref{eq:szegos condition}} is almost surely satisfied and, in this case, one obtains the conclusion of Theorem  {\upshape \ref{theor:IZ}} directly from Theorem {\upshape \ref{theor:Szego}}, and there is no need   to appeal to Theorem {\upshape \ref{theor:CarlsonBourion}}. See also {\upshape \cite{Arnold}}.
\end{remark}

\begin{remark}\rm If $X$ is a Bernoulli variable with $\P(X=1)=p \in (0,1)$, then almost all $f_w$ belong to $\S$, but almost none of the $f_\omega$
satisfy the condition \eqref{eq:szegos condition} of Theorem \ref{theor:Szego}.
\end{remark}

\noindent \textbf{Expected distribution function in the iid case.}
For each $\omega \in \Omega$, denote by $\mu_{n,\omega}$ and
$\rho_{n,\omega}$ the probability measures associated to $f_\omega$
and let $F_{n, \omega}(t), t \ge 0$, denote the distribution
function of  $\rho_{n,\omega}$.

Consider the expected  distribution function
$$\Phi_n(t)=\int_\Omega F_{n, \omega}(t)\, d\P(\omega)\, , \quad \mbox{for $t \ge 0$}\, .
$$

Since the $X_j$ are completely independent and identically
distributed, the section $s_{n, \omega}(z)=\sum_{k=0}^n X_k(\omega)
z^k$ and its reversed companion $\sum_{k=0}^n X_{n-k}(\omega) z^k$
are identically distributed and, consequently, the following
symmetry holds:
$$
\Phi_n(t)=1-\Phi_n(1/t)\, , \quad \mbox{for any  $0<t\le1$}\, .
$$
Notice that $\Phi_n(1)=1/2$, for each $n \ge 1$.

Recall that, by Hurwitz's theorem, $\lim_{n \to
\infty}F_{n, \omega}(t)=0$, for each $t <1$, almost surely, and so, by
dominated convergence, $\lim_{n \to \infty} \Phi_n(t)=0$ for each $t
<1$. Consequently, $\lim_{n \to \infty} \Phi_n(T)=1$, for each
$T>1$. The last convergence statement follows also from the fact the $G=1$
almost surely and from Theorems \ref{theor:CarlsonBourion} and
\ref{theor:gauge and convergence of probabilities}. Therefore,
$$
\lim_{n \to \infty} \Phi_n(t)=\begin{cases} 0, & t<1\,,\\
1/2,& t=1\, ,\\
1,& t>1\, .
\end{cases}
$$

For Gaussian  $X$ or, more generally, for  $X$
in the domain of attraction of a stable law of exponent $\alpha \in
(0,2]$, there are precise expressions for $\E(\mu_n(B))$ for any
Borel set $B\subset \C$; see~\cite{SheppVanderbei} and~\cite{IbraZeit}.

\subsection{Independent (not necessarily equidistributed) coefficients}

Let us consider now a sequence $(X_n)_{n \ge 0}$ of mutually
independent random variables in a certain probability space
$(\Omega, \P)$; no assumption now on a common distribution. As
above, we let
$$ f_\omega(z)=\sum_{n=0}^\infty X_n(\omega)\, z^n\, .
$$
For $n \ge 0$ and $\gamma \in (0,1)$, we denote
$A_n(\gamma)=\max\limits_{(1-\gamma)n \le k \le n}|X_k|$.

\smallskip

After  reviewing the discussion above of the iid case, it is easy to
come out with natural and simple conditions on the sequence of
independent variables $(X_n)_{n \ge 0}$ which are enough to
guarantee the conclusions of Lemmas \ref{lemma:random radius} and
\ref{lemma:non null implies gauge=1}.

\medskip

A) If for some $\varepsilon >0$, one has
\begin{equation}
\label{eq:condition near infty independent}
\sup_{n \ge 0} \E\big((\ln^+|X_n|)^{1+\varepsilon}\big)<+\infty\,,
\end{equation}
then for $\alpha >0$  and $n \ge 0$, Markov's inequality gives us that
$$
\P(|X_n|\ge e^{\alpha n})\le
\frac{\E((\ln^+|X_n|)^{1+\varepsilon})}{\alpha^{1+\varepsilon}n^{1+\varepsilon}}\, .$$
Therefore $$\sum_{n=0}^\infty \P(|X_n|\ge e^{\alpha n})<+\infty\, , \quad \mbox{for each $\alpha >0$}, ,$$ and, consequently, see the
proof of Lemma \ref{lemma:random radius}), we conclude that
$$\limsup_{n \to \infty} \sqrt[n]{|X_n|}\le 1\quad \ \mbox{almost surely}\,.$$

\smallskip

B)  If for some $\delta >0$, one has that
\begin{equation}
\label{eq:condition near 0 independent}
\inf_{n \ge 0}\P(|X_n| \ge \delta) >0\, ,\end{equation}
then (see the proof of Lemma \ref{lemma:random radius})  $$\limsup_{n \to \infty} \sqrt[n]{|X_n|}\ge 1\quad \mbox{almost surely}\, ,$$
and, besides, the proof of Lemma \ref{lemma:non null implies gauge=1} carries over and gives that
$$
\liminf_{n \to \infty} \sqrt[n]{A_n(\gamma)}\ge 1 \quad \mbox{almost surely}\, .
$$

\

Therefore we have:
\begin{theorem}\label{theor:szego for no id}
Under conditions {\upshape \eqref{eq:condition near infty
independent}} and {\upshape \eqref{eq:condition near 0 independent}}
above, the random power series $f_\omega$ is almost surely a
Szeg\H{o} power series.
\end{theorem}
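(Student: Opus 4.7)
The plan is to reduce the statement to Theorem \ref{theor:CarlsonBourion} by showing that, under conditions \eqref{eq:condition near infty independent} and \eqref{eq:condition near 0 independent}, almost surely $f_\omega \in \F$ and its gauge $G$ equals $1$. The first of these, namely that the radius of convergence of $f_\omega$ is almost surely $1$, is essentially already recorded in paragraphs A) and B) above: A) yields $\limsup_n \sqrt[n]{|X_n|} \le 1$ a.s.\ via Markov plus Borel--Cantelli, while B) yields $\limsup_n \sqrt[n]{|X_n|} \ge 1$ a.s.\ via independence plus Borel--Cantelli. Combining these, $\limsup_n \sqrt[n]{|X_n|} = 1$ a.s., so $f_\omega \in \F$ almost surely.

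The substantive step is the gauge. I will adapt the proof of Lemma \ref{lemma:non null implies gauge=1}. Fix $\gamma \in (0,1)$ and $\varepsilon > 0$. By condition \eqref{eq:condition near 0 independent}, pick $\delta > 0$ and $p > 0$ with $\P(|X_n| \ge \delta) \ge p$ for every $n \ge 0$, and choose $N = N(\delta, \varepsilon)$ so that $(1-\varepsilon)^n < \delta$ for $n \ge N$. Then, for such $n$, using the mutual independence of the $X_k$,
$$
\P\big(\sqrt[n]{A_n(\gamma)} \le 1-\varepsilon\big) = \prod_{(1-\gamma)n \le k \le n} \P\big(|X_k| \le (1-\varepsilon)^n\big) \le (1-p)^{\lfloor \gamma n \rfloor}\, .
$$
These terms are summable in $n$, and the (direct half of the) Borel--Cantelli lemma gives that $\liminf_n \sqrt[n]{A_n(\gamma)} \ge 1-\varepsilon$ almost surely. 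Intersecting over a countable sequence $\varepsilon_j \downarrow 0$, we get $\liminf_n \sqrt[n]{A_n(\gamma)} \ge 1$ a.s., that is, $L(\gamma) \ge 1$ a.s.

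Now fix any sequence $\gamma_k \downarrow 0$ in $(0,1)$ and intersect over $k$ the almost sure events $\{L(\gamma_k) \ge 1\}$: on the resulting full-measure event, $L(\gamma_k) = 1$ for every $k$ (since trivially $L(\gamma)\le 1$), and by the monotonicity of $\gamma \mapsto L(\gamma)$ this forces $L(\gamma) = 1$ for every $\gamma \in (0,1)$. Hence $G = \lim_{\gamma \downarrow 0} L(\gamma) = 1$ almost surely.

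Having established that almost surely $f_\omega \in \F$ with gauge $G = 1$, Theorem \ref{theor:CarlsonBourion} applies pointwise on this full-measure event and yields $f_\omega \in \S$ almost surely. The only real point requiring care is the passage from the per-$\gamma$ almost sure statement $L(\gamma) \ge 1$ to the single almost sure statement $G = 1$, which is handled by the countable intersection argument and the monotonicity of $L(\gamma)$; everything else is a direct transcription of the iid arguments with the uniform bounds \eqref{eq:condition near infty independent} and \eqref{eq:condition near 0 independent} taking the place of the identical distribution hypothesis.
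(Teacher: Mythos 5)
Your proof is correct and takes essentially the same route as the paper: combine the Borel--Cantelli arguments from paragraphs A) and B) to get $\limsup_n\sqrt[n]{|X_n|}=1$ and $L(\gamma)\ge 1$ almost surely, then invoke Theorem \ref{theor:CarlsonBourion}. The only difference is that you spell out the product bound $\prod_k\P(|X_k|\le(1-\varepsilon)^n)\le(1-p)^{\lfloor\gamma n\rfloor}$ and the countable-intersection/monotonicity step for passing from per-$\gamma$ a.s.\ statements to $G=1$ a.s., both of which the paper leaves implicit with ``the proof of Lemma \ref{lemma:non null implies gauge=1} carries over.''
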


Conditions analogous to \eqref{eq:condition near infty independent}
and \eqref{eq:condition near 0 independent} appear also in
\cite{Pritsker} to obtain a result like Theorem~\ref{theor:szego
for no id}.

\begin{remark}\rm
Conditions \eqref{eq:condition near infty independent} and
\eqref{eq:condition near 0 independent} are not as demanding than
those appearing in \cite{ShmerlingHochberg}: no continuous densities
or finite moments assumptions other than the log-moment above.

Under the assumptions of \cite{ShmerlingHochberg}, Theorem 1,  one actually
has $\lim_{n \to \infty} \sqrt[n]{|X_n|}=1$ almost surely,
and thus almost surely Theorem \ref{theor:Szego} applies
(no need to appeal to Theorem \ref{theor:gauge and convergence of
probabilities}), and the sequence $(\mu_n)_{n \ge 0}$ of random
probabilities converges  almost surely to the uniform probability
$\Lambda$  on $\partial\D$.
\end{remark}

\noindent \textbf{Bernoulli trials.} Let  $(X_n)_{n \ge 0}$ be a
sequence of completely independent Bernoulli variables with
$p_n=\P(X_n=1)$, for $n \ge 0$. Notice that  condition
\eqref{eq:condition near infty independent} is trivially satisfied in
this case.

\smallskip

Because of independence and the Borel--Cantelli lemmas, for the
radius of convergence  we have in this case that:

\smallskip
a) \,  if $\sum_{n=1}^\infty p_n=+\infty$, then almost surely the radius
of convergence of $f_\omega$ is 1;

\medskip

\quad b) \,  if $\sum_{n=1}^\infty p_n<+\infty$,
then $f_\omega$ is almost surely a polynomial and its radius of
convergence is $+\infty$.

As for belonging to $\S$, we have that if $\inf_{n \ge 0}
p_n>0$, then both conditions, \eqref{eq:condition near infty
independent} and \eqref{eq:condition near 0 independent}, are
satisfied and almost surely $f_\omega$ is in $\F$ and also in $\S$.

\medskip

Consider now the case where
$$(\dag) \quad p_n=1/n\,, \mbox{for $n \ge 1$}\, .$$
In this case $f_\omega$ has radius of convergence 1, almost surely.
Condition \eqref{eq:condition near 0 independent} is not satisfied
and, in fact, as we shall presently verify,  the index of $f_\omega$
is almost surely 1.

Fix $\gamma \in (0,1)$ and let $(n_k)_{k \ge 1}$ such that
$(1-\gamma)>n_{k-1}/n_{k}\to (1-\gamma)$.

Then $\P\big(A_{n_k}(\gamma)=0 \big)=\prod_{(1-\gamma)n_k \le j \le
n_k} (1-1/j)$ and so, for $k$ large enough,
$$
\P\big(A_{n_k}(\gamma) =0\big)\ge \exp\Big(-2 \sum_{(1-\gamma)n_k
\le j \le n_k}1/j\Big) \ge C (1-\gamma)^2\, .
$$
Since $(1-\gamma)>n_{k-1}/n_{k}$, the events $\{A_{n_k}(\gamma)\}$ are independent. The lemma of Borel--Cantelli now gives that
$$
\P\big(A_{n_k}(\gamma)=0\quad  \mbox{\rm infinitely often}\big)=1\, ,
$$
and, consequently,
$$
\P(L(\gamma)=0)=1\, , \quad \mbox{for each $\gamma \in (0,1)$}\, .
$$
Therefore the index $\Gamma$ is almost surely 1. In this case we are, almost surely, in the situation of Remark \ref{rem:infinite Ostrowsky gaps}.

So, for probabilities $p_n$ satisfying $(\dag)$ the random power
series $f_\omega$ has almost surely radius of convergence 1, but
almost surely $f_\omega$ is not a Szeg\H{o} power series.

\medskip

\section{Limits of zero counting measures}

It is natural to ask what are the possible (weak) limits of the sequence of probabilities $\rho_n$ associated to a given $f\in \F$. Let us denote by $\mathcal{L}_f$ the collection of those limits points; the elements of $\mathcal{L}_f$ are probability measures on $[0,+\infty]$.

If the index $\Gamma$  of $f$ is 0, then, by Theorem
\ref{theor:CarlsonBourion}, the only such limit is $\delta_1$, i.e.
$\mathcal{L}_f=\{\delta_1\}$, and, conversely.

\subsection{Power series with coefficients 0 or 1}

If all the coefficients of the power series $f\in \F$ are 0 or 1 then
we have the following  complete description of $\mathcal{L}_f$:

 \begin{proposition}
If $f\in \F$ has index $\Gamma$, then
$$\mathcal{L}_f=\{(1-u)\delta_1+u \delta_{\infty_{\scriptscriptstyle \C}}: 0\le
u \le \Gamma\}\, .$$
 \end{proposition}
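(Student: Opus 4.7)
My plan is as follows. Write $S=\{n\ge 0: a_n=1\}$, an infinite set since $f$ has radius $1$, and for each $n$ let $d_n=\max(S\cap [0,n])$, so that $s_n$ has actual degree $d_n$ and $\rho_n$ places mass $u_n:=(n-d_n)/n$ at $\infty_{\scriptscriptstyle \C}$. Because all coefficients are $0$ or $1$, the quantities $A_n(\gamma)$ defined in Section 2 are also $0$ or $1$, so $L(\gamma)\in\{0,1\}$ for each $\gamma\in(0,1)$; being non-decreasing in $\gamma$, it satisfies $L(\gamma)=0$ for $\gamma<\Gamma$ and $L(\gamma)=1$ for $\gamma>\Gamma$. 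Translating into $S$: for $\gamma>\Gamma$ every interval $[(1-\gamma)n,n]$ eventually contains a point of $S$, forcing $u_n\le\gamma$ for large $n$; for $\gamma<\Gamma$ infinitely many such intervals miss $S$, giving $u_n>\gamma$ infinitely often. Hence $\limsup_n u_n=\Gamma$ and every subsequential limit of $u_n$ lies in $[0,\Gamma]$.

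Next I would show that every element of $\mathcal{L}_f$ has the claimed form. Fix a subsequence along which $u_{n_k}\to u_\infty\in[0,\Gamma]$, and pass to a further subsequence so that $\rho_{n_k}$ converges (possible by tightness on $\widehat{\C}$). By Hurwitz's theorem $F_{n_k}(t)\to 0$ for $t<1$. Since $s_{n_k}$ and $s_{d_{n_k}}$ are the same polynomial (of degree $d_{n_k}$), the distribution functions satisfy
\[
F_{n_k}(t)=\frac{d_{n_k}}{n_k}\,F_{d_{n_k}}(t)\qquad\text{for every }t<\infty,
\]
and $d_{n_k}/n_k\to 1-u_\infty$. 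But $a_{d_{n_k}}=1$, so $|a_{d_{n_k}}|^{1/d_{n_k}}=1$, and the Jentzsch--Szeg\H{o} argument reviewed after inequality \eqref{eq:weaktype jensen for sections}, applied to the section $s_{d_{n_k}}$, yields $F_{d_{n_k}}(T)\to 1$ for each $T>1$. Combined with the mass $u_{n_k}\to u_\infty$ at $\infty_{\scriptscriptstyle \C}$, this pins down the weak limit as $(1-u_\infty)\delta_1+u_\infty\delta_{\infty_{\scriptscriptstyle \C}}$.

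For the reverse inclusion I must realize every $u\in[0,\Gamma]$ as such a limit. Note that $u_{n+1}-u_n=(1-u_n)/(n+1)=O(1/n)$ when $a_{n+1}=0$, while $u_{n+1}=0$ when $a_{n+1}=1$. Thus along each gap between consecutive elements of $S$ the sequence $u_n$ increases continuously with $o(1)$ increments from $0$ up to some value, then resets to $0$ at the next element of $S$. Since $\limsup u_n=\Gamma$ there are arbitrarily long gaps in which $u_n$ climbs to values arbitrarily close to $\Gamma$; choosing indices inside such gaps allows us to extract subsequences $n_k$ with $u_{n_k}\to u$ for any prescribed $u\in[0,\Gamma]$. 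Applying the previous paragraph to each such subsequence finishes the proof.

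The main obstacle I anticipate is to justify that $F_{d_{n_k}}(T)\to 1$ along the (possibly sparse) subsequence $(d_{n_k})$ itself, rather than only along a further extraction; this is handled cleanly by \eqref{eq:weaktype jensen for sections} because $|a_{d_{n_k}}|=1$ \emph{exactly}, so the standard argument goes through verbatim. The rest is bookkeeping around the convention that $\infty_{\scriptscriptstyle \C}$ carries the missing mass $1-d_n/n$.
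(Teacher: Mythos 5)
Your proof is correct and follows essentially the same route as the paper's: both apply the Jensen-type inequality \eqref{eq:weaktype jensen for sections} at the indices where the coefficient equals $1$ (your $d_{n_k}$, the paper's $m(n)$) to conclude $\rho_n\to\delta_1$ along that subsequence, and then both exploit the identity $F_n\equiv (d_n/n)\,F_{d_n}$ together with the observation that the subsequential limits of $d_n/n$ fill out exactly $[1-\Gamma,1]$. Your elaboration of why these limits cover the full interval (via the $O(1/n)$ increments of $u_n$ between resets) merely spells out what the paper states without proof.
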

\begin{proof}
Denote by $\mathcal{M}$ the collection of indexes $n$ such that $a_n=1$. With no loss of generality we assume that $a_0=1$.

For $T >1$ and $n \in \mathcal{M}$, equation
\eqref{eq:weaktype jensen for sections} gives us
$$
\ln(T)\, \big(1-F_n(T)\big)\le \frac{1}{2\pi}\int_0^{2\pi}\ln
\sqrt[n]{|s_n(e^{\imath \vartheta})|^2}\,
d\vartheta\le \frac{2}{n}\ln(n+1)\, ,
$$
and so, for any $T>1$,
$$
\lim_{\substack{n \in \mathcal{M}; \\n \to \infty}} F_n(T)=1\, .
$$
Consequently, $\rho_n$ tends to $\delta_1$ as $n \in \mathcal{M}$ tends to $\infty$.

For integer $n\ge 0$, let $m(n)=\max\{m \in \mathcal{M}: m \le n\}$. Observe that
$$
F_n\equiv \frac{m(n)}{n} F_{m(n)}\, , \quad \mbox{for $n \ge 0$}\, .
$$
Notice also that each $\rho_n$ has mass $1-m(n)/n$ at $+\infty$.

Thus, for an increasing sequence $(n_k)_{k \ge 1}$ of indexes,  the sequence $\rho_{n_k}$ has limit, say,  $\rho$ if and only if the sequence $m(n_k)/n_k$ converges, say, to $\alpha$; in that case $\rho=\alpha \delta_1+(1-\alpha)\delta_{\infty_{\scriptscriptstyle \C}}$.

Since the possible limits of sequences $m(n_k)/n_k$ cover exactly the interval $[1-\Gamma, 1]$, the result follows.
\end{proof}

The simple argument above is modeled upon part of the discussion of \cite{BlochPolya}.

\subsection{A universal power series}

Let $\mathcal{P}$ be  the set of probability (Borel) measures in $[0,+\infty)$ and let $\mathcal{P}_1$ be the subset of $\mathcal{P}$ of those probabilities supported in $[1,+\infty)$. We endow $\mathcal{P}$ with the L\'evy--Prokhorov metric (distance) $D$ with respect to Euclidean distance in $[0,\infty)$; convergence with respect to this metric $D$ and weak convergence coincide.

In this section we shall exhibit an example of a
\textit{single} power series $f(z)=\sum_{n=0}^\infty a_n z^n\in \F$ such that \textit{every} probability
measure in $[1,+\infty)$ is a limit of a subsequence of the probability measures
$(\rho_n(f))_{n \ge 0}$ associated to the sequence of sections of $f$. The power series $f$ is \textit{universal} in the sense
that the probabilities measures $\rho_n(f)$  are dense
in $\mathcal{P}_1$:
$$
\mbox{\rm clos}_D\{\rho_n(f): n \ge 1\}=\mathcal{P}_1\, .
$$

\smallskip

 The countable collection $\mathcal{D}\subset \mathcal{P}_1$ of probabilities of the form
$(1/m)\sum_{j=1}^m \delta_{q_j}$, where $m$ is an integer $m\ge 1$ and $1 < q_1<\cdots<q_m$ are rational numbers,
 is dense in $\mathcal{P}_1$. Let
$(\varphi^{(k)})_{k \ge 1}$ be a sequence of probabilities which
contains each of the probabilities in $\mathcal{D}$  infinitely many times.

\smallskip

The power series $f\in \F$ will have the form
$$
f(z)=1+\sum_{j=1}^\infty z^{N_j} Q_j(z)\, .
$$
The $Q_j$ are polynomials with $Q_j(0)=1$. Denote $P_k(z)=1+\sum_{j=1}^k z^{N_j} Q_j(z)$ and $d_k=\mbox{\rm deg}(P_k)$. The integers $N_j$ grow so fast that $N_k > d_{k{-}1}$ for any $k \ge 1$. Thus  $s_{d_k}(f)=P_k$, for $k \ge 1$.

The polynomials $Q_j$ and the integers $N_j$ will be defined iteratively so that
$$
D\big(\rho_{d_k}(f), \varphi^{(k)}\big)\le \frac{1}{k}\, \quad \mbox{for any $k \ge 1$}\, .
$$

Before starting the actual construction  we record a few preliminary lemmas. We shall need the following estimate of the distance $D$ of two specific probabilities whose verification follows directly form the definition of $D$.

\begin{lemma}\label{lemma:distance of some probabilities}
Let $\mu \in \mathcal{P}_1$ be given by $\mu=(1/m)\sum_{j=1}^m \delta_{r_j}$ where $1< r_1<\cdots <r_m$.

Let $\varepsilon >0$ be such that the intervals $I_j(\varepsilon):=(r_j-\varepsilon, r_j+\varepsilon)$ are pairwise disjoint.

Let $\nu\in \mathcal{P}$ be given by
$$
\frac{1}{mk+h} \Big(\sum_{j=1}^m \sum_{l=1}^k \delta_{s_{j,l}}+\sum_{i=1}^h \delta_{t_i}\Big)
$$
where $s_{j,l}\in I_j(\varepsilon)$, for $1\le j \le m, 1\le l \le k$ and $t_i \ge 0$ for $1\le i \le h$.

If $\dfrac{h}{m\,k} < \varepsilon $, then $D(\mu,\nu) < \varepsilon$.
\end{lemma}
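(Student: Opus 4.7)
The plan is to verify directly the two defining inequalities of the Lévy--Prokhorov distance. Recall that $D(\mu,\nu) < \varepsilon$ is implied by the condition that for every Borel set $A \subseteq [0,+\infty)$ one has both
\[
\mu(A) \le \nu(A^\varepsilon) + \varepsilon \quad \text{and} \quad \nu(A) \le \mu(A^\varepsilon) + \varepsilon,
\]
where $A^\varepsilon$ denotes the open $\varepsilon$-neighborhood of $A$ in $[0,+\infty)$. Since $\mu$ is the uniform average of Dirac masses at the points $r_j$, the problem reduces to bookkeeping how many atoms of $\nu$ fall near each $r_j$, the rest being absorbed by the slack $\varepsilon$.

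For the first inequality I would fix a Borel set $A$ and let $J = \{j : r_j \in A\}$, so that $\mu(A) = |J|/m$. Since $r_j \in A$ for $j \in J$, the interval $I_j(\varepsilon)$ is contained in $A^\varepsilon$, and hence every $s_{j,l}$ with $j \in J$ lies in $A^\varepsilon$; this gives $\nu(A^\varepsilon) \ge |J|k/(mk+h)$. The first inequality then amounts to noting that $|J|/m - |J|k/(mk+h) = |J|\, h/(m(mk+h)) \le h/(mk) < \varepsilon$, where we used $|J|\le m$.

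For the second inequality I would let $J'' = \{j : s_{j,l}\in A \text{ for some } l\}$. For each $j \in J''$, picking any such $s_{j,l}\in A$ shows $r_j \in A^\varepsilon$, since $|r_j - s_{j,l}| < \varepsilon$; hence $\mu(A^\varepsilon) \ge |J''|/m$. On the other hand, the atoms of $\nu$ contained in $A$ number at most $|J''|k$ among the $s_{j,l}$ (at most $k$ values of $l$ for each $j \in J''$) and at most $h$ among the $t_i$, so $\nu(A) \le (|J''|k + h)/(mk+h)$. A short arithmetic rearrangement analogous to the one above then bounds $\nu(A) - \mu(A^\varepsilon)$ by $(m-|J''|)h/(m(mk+h)) \le h/(mk) < \varepsilon$.

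There is no genuine obstacle: the lemma is essentially a bookkeeping exercise. The only points requiring care are to invoke the pairwise disjointness of the intervals $I_j(\varepsilon)$ at the right moment (so that the atoms of $\nu$ near $r_j$ contribute to $\nu(A^\varepsilon)$ in the right place and are not double-counted), and to observe that the hypothesis $h/(mk) < \varepsilon$ is precisely what is needed to absorb the discrepancy coming from the possibly wildly-placed extra atoms $t_i$.
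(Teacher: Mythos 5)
Your proof is correct in substance and is exactly the direct verification the paper alludes to (the paper offers no proof, merely remarking that the claim \lq\lq follows directly from the definition of $D$\rq\rq). One small technical point: your opening sentence asserts that $D(\mu,\nu)<\varepsilon$ is \emph{implied} by the pair of inequalities $\mu(A)\le\nu(A^\varepsilon)+\varepsilon$ and $\nu(A)\le\mu(A^\varepsilon)+\varepsilon$ holding for all Borel $A$; in fact that only yields $D(\mu,\nu)\le\varepsilon$. The strict inequality does follow from what you proved, but one should say why: your arithmetic actually bounds both discrepancies by $h/(mk+h)$, which is strictly less than $\varepsilon$, and each $|s_{j,l}-r_j|$ is strictly less than $\varepsilon$ because the $I_j(\varepsilon)$ are open; hence the identical argument runs with $\varepsilon$ replaced by any $\varepsilon_0$ satisfying $\max\bigl(\max_{j,l}|s_{j,l}-r_j|,\ h/(mk+h)\bigr)<\varepsilon_0<\varepsilon$, giving $D(\mu,\nu)\le\varepsilon_0<\varepsilon$. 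Also, contrary to your closing remark, the pairwise disjointness of the intervals $I_j(\varepsilon)$ is never invoked in the estimates you wrote: the given labeling of the atoms $s_{j,l}$ already resolves any ambiguity, and what you actually use is only the distinctness of the $r_j$. The disjointness is a harmless hypothesis that happens to hold in the construction where the lemma is applied.
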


For integer $M\ge 1$, we denote by $\mathcal{U}_M$ the collection of the $M$-th roots of unity.

\begin{lemma}
\label{lemma:near zeros 1-zN}
For  integer $M \ge 1$ and radius $r>0$, one has
$$\Big|1-\Big(\frac{z}{r}\Big)^M\Big|\ge 3-e\, , \quad \mbox{for any $z$ such that $\mbox{\rm{dist}}(z,r\mathcal{U}_M)\ge r/M$.}
$$
\end{lemma}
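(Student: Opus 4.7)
The plan is to reduce to the scale-invariant form of the inequality and then apply a minimum-modulus argument whose boundary estimate is a one-line binomial expansion. Setting $w=z/r$, the claim becomes: for every $w\in\C$ with $\mbox{dist}(w,\mathcal{U}_M)\ge 1/M$,
$$
|1-w^M|\ge 3-e.
$$
Let $\zeta_0,\ldots,\zeta_{M-1}$ denote the $M$-th roots of unity, and consider the open region $\Omega=\{w\in\C:\ |w-\zeta_k|>1/M\ \text{for all $k$}\}$, the complement of the closed disks $\overline{B(\zeta_k,1/M)}$.

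The polynomial $p(w)=w^M-1$ has its zeros exactly at the $\zeta_k$, all of which lie in the excluded disks, so $p$ is non-vanishing on $\overline{\Omega}$. Since $|p(w)|\ge |w|^M-1\to\infty$ as $|w|\to\infty$, truncating by $|w|\le R$ for $R$ large and invoking the minimum modulus principle on each bounded component of $\Omega\cap\{|w|\le R\}$ shows that the infimum of $|p|$ over $\overline{\Omega}$ is attained on the boundary
$$
\bigcup_{k=0}^{M-1}\{w:|w-\zeta_k|=1/M\}.
$$

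It remains to estimate $|p|$ on each such boundary circle. On $|w-\zeta_k|=1/M$, write $w=\zeta_k(1+\varepsilon)$ with $\varepsilon=\zeta_k^{-1}(w-\zeta_k)$, so that $|\varepsilon|=1/M$. Then $w^M=\zeta_k^M(1+\varepsilon)^M=(1+\varepsilon)^M$, and the binomial expansion gives
$$
|w^M-1|=|(1+\varepsilon)^M-1|\ \ge\ M|\varepsilon|-\sum_{j=2}^M\binom{M}{j}|\varepsilon|^j=1-\sum_{j=2}^M\binom{M}{j}M^{-j}.
$$
Using the elementary bound $\binom{M}{j}M^{-j}=\frac{1}{j!}\prod_{i=0}^{j-1}(1-i/M)\le 1/j!$ together with $\sum_{j=2}^\infty 1/j!=e-2$, one concludes $|w^M-1|\ge 1-(e-2)=3-e$ on the boundary circles, and hence throughout $\overline{\Omega}$ by the preceding paragraph.

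The only mildly subtle step is the application of the minimum modulus principle on the unbounded region $\Omega$, handled by the $|w|\le R$ truncation above; the rest is routine bookkeeping and the constant $3-e$ emerges naturally from the tail $\sum_{j\ge 2}1/j!=e-2$ of the exponential series.
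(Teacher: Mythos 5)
Your proof is correct and follows essentially the same two-step strategy as the paper: a binomial-expansion estimate on the small boundary circles (yours via $\binom{M}{j}M^{-j}\le 1/j!$ and the tail $\sum_{j\ge 2}1/j!=e-2$, the paper's via $(1+1/M)^M-2\le e-2$, the same constant), followed by a modulus-principle argument to extend the bound to the unbounded region (you apply the minimum modulus principle to $p$ with an $|w|\le R$ truncation; the paper applies the maximum modulus principle to $1/p$, which tends to $0$ at infinity). The differences are cosmetic.
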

\begin{proof}
Let $z$ be such that $|z-ru|=r/M$, where $u \in \mathcal{U}_M$. Write  $z=r(u+w/M)$, with $|w|=1$. We have that
$$\begin{aligned}
\Big|1-\Big(\frac{z}{r}\Big)^M\Big|&=\Big|1-\Big(u+\frac{w}{M}\Big)^M\Big|=\Big|\sum_{j=1}^M \binom{M}{j}\frac{w^j u^{M-j}}{M^j}\Big|\ge 1 -\Big|\sum_{j=2}^M \binom{M}{j}\frac{w^j u^{M-j}}{M^j}\Big|\\
&\ge 1 -\sum_{j=2}^M
\binom{M}{j}\frac{1}{M^j}=1-\bigg(\Big(1+\frac{1}{M}\Big)^M-2\bigg)\ge
3-e\,.
\end{aligned}
$$
Let now $\Omega$ be the domain $\Omega=\big\{z\in \C: \, \mbox{\rm
dist}\big(z, r\, \mathcal{U}_M\big) \ge ({r}/{M})\big\}$, and let
$g$ be the function
$$
g(z)=\dfrac{1}{1-\big({z}/{r}\big)^M}\, .
$$
The function $g$ is holomorphic and does not vanish in $\Omega$. Since $g$  is continuous up to the finite boundary of $\Omega$,   $|g|$ is bounded there by $1/(3-e)$ and $\lim_{z \to \infty_{\scriptscriptstyle\C}}g(z)=0$, the maximum principle shows that $g$ is bounded everywhere in $\Omega$ by $1/(3-e)$.
\end{proof}

\begin{corollary}\label{cor:near zeros a few of 1-zN} Let $1<r_1<r_2<\cdots< r_m$, and let $M$ be an integer $M \ge 1$. Then
$$
\Big|\prod_{j=1}^m \Big(1-\Big(\frac{z}{r_j}\Big)^M\Big)\Big|\ge (3-e)^m\, , \quad \mbox{if $\mbox{\rm dist}\big(z, r_j \mathcal{U}_M\big)\ge {r_j}/{M}$ for $1\le j \le m$}\, .
$$
\end{corollary}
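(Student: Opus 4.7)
The plan is essentially a one-line reduction to the preceding Lemma \ref{lemma:near zeros 1-zN}. For each index $j\in\{1,\dots,m\}$, the hypothesis $\mathrm{dist}(z, r_j\mathcal{U}_M)\ge r_j/M$ is exactly the hypothesis of that lemma applied with $r=r_j$, so we immediately obtain the factorwise estimate
$$
\Bigl|1-\Bigl(\tfrac{z}{r_j}\Bigr)^{M}\Bigr|\ge 3-e.
$$
Since the assumption of the corollary requires this distance condition to hold \emph{simultaneously} for every $j=1,\dots,m$, we may multiply these $m$ inequalities and conclude
$$
\Bigl|\prod_{j=1}^m\Bigl(1-\Bigl(\tfrac{z}{r_j}\Bigr)^{M}\Bigr)\Bigr|=\prod_{j=1}^m\Bigl|1-\Bigl(\tfrac{z}{r_j}\Bigr)^{M}\Bigr|\ge (3-e)^m.
$$

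There is no genuine obstacle; the only thing to check is that the geometric hypothesis is used uniformly in $j$, which is precisely what is assumed. The ordering $1<r_1<\cdots<r_m$ plays no role in the proof itself — it is just the setting in which the corollary will later be invoked (to separate the circles $r_j\mathcal{U}_M$ for different $j$ when choosing $M$ large). So the proof I would write is just: apply Lemma \ref{lemma:near zeros 1-zN} to each factor and multiply.
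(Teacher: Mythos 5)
Your proof is correct and is exactly the intended (and, in the paper, unstated) argument: Lemma \ref{lemma:near zeros 1-zN} applies to each factor with $r=r_j$ since the distance hypothesis holds simultaneously for all $j$, and the result follows by multiplying the $m$ estimates. Your remark that the ordering of the $r_j$ is irrelevant here and only matters when the corollary is later invoked is also accurate.
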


We are now ready to describe  the iterative \textbf{construction of $f$}.

\smallskip

Start with $P_0\equiv 1$. Suppose that we have completed $k{-}1$ steps in the
construction of our  power series and that so far we have a section,
denoted $P_{k-1}$, which has degree $d_{k-1}$.

\smallskip

Write the target probability measure $\varphi^{(k)}$ as $\varphi^{(k)}=(1/m) \sum_{j=1}^m  \delta_{r_j}$, where $1<r_1<\cdots< r_m$. Denote
$$\tau=\min\Big\{\frac{r_{j}{-}r_{j{-}1}}{r_j{+}r_{j{-}1}}; 1< j \le m\Big\}$$ and  let $A=\max\big\{|P_{k{-}1}(z)|; \, |z|\le 2 r_m\big\}$.

\smallskip
For integers $N,M\ge 1$, to be determined shortly, we set $$P_{k}(z)=P_{k{-}1}(z)+z^N \prod_{j=1}^m \Big(1-\Big(\frac{z}{r_j}\Big)^M\Big)\,.$$

To start with we require $(\star_1)\, \, N >d_{k{-}1}$. This gives that the coefficients up to index $d_{k{-}1}$ of $P_k$ and of $P_{k{-}1}$ coincide. Observe that the degree $d_k$ of $P_k$ is $d_k=d_{k{-}1}+N+mM$.

Now, we  may choose $N$ large enough, depending only on $m$,  so that if
$$
P_{k}(z)-P_{k{-}1}(z)=\sum_{n=N}^{d_k} b_n z^n\, .
$$
then $\sqrt[n]{b_n}\le 1+\dfrac{1}{k}$, for each $N\le n \le d_k$, no matter what $M\ge 1$ may be.
For that purpose and since $r_j>1$, it is enough to choose $N$ so that
$$
(\star_2)\quad\binom{m}{\lceil m/2\rceil}^{1/N}\le 1+\frac{1}{k}\, .
$$
Observe also that the coefficient of index $N$ of $P_k$ is 1. All this means that the final outcome of this iterative construction will be a power series in $\F$.
We remark that this requirement upon $N$ imposes no restriction on $M \ge 1$.

\smallskip

Next we study the distribution of the zeros of $P_k(z)$ with the aim of showing that the circular projection of the zero counting measure of $P_k$ on the positive real axis is close to the given $\varphi^{(k)}$. We shall compare the location of the zeros of $P_k$ and the location of the zeros  of $z^N \prod_{j=1}^m \big(1-({z}/{r_j})^M\big)$. The zeros of this last polynomial are $\bigcup_{j=1}^m r_j \mathcal{U}_M$ and $z=0$, a total of $N$ times.

We require next that $M$ is so large that $(\flat)\, \,  (1/M)\le \tau$ and also that $(\flat\flat)\,\, r_1(1-1/M)>(1+r_1)/2$. Because of $(\flat)$, the disks $\{z\in \C: |z-r_j \eta|=r_j/M\}$ where $1\le j \le m$ and $\eta \in \mathcal{U}_M$ are pairwise disjoint.

We apply Rouche's theorem in each of these disks. If $z$ is such that $|z-r_j \eta|=r_j/M$ with $1\le j \le m$ and $\eta \in \mathcal{U}_M$, then $|z|\le (1+1/M)r_j\le 2 r_m$ and, therefore,
$$
\Big|P_k(z)-z^N \prod_{j=1}^m \Big(1-\Big(\frac{z}{r_j}\Big)^M\Big)\Big|=|P_{k-1}(z)|\le A\, ,
$$
while, because of Corollary \ref{cor:near zeros a few of 1-zN} and $(\flat\flat)$,
$$
\Big|z^N \prod_{j=1}^m \Big(1-\Big(\frac{z}{r_j}\Big)^M\Big)\Big|\ge \Big(\frac{r_1+1}{2}\Big)^N (3-e)^m\,.
$$

Therefore, if $N$ is such that  $(\star_3)\quad ((r_1+1)/2)^N (3-e)^m >A$, the polynomial  $P_k(z)$ has one zero in each of the disks $\{z \in \C: |z-r_j \eta|=r_j/M\}$. This occurs no matter what the value of $M$ is, as long as $(\flat)$ and $(\flat\flat)$ are satisfied.

Fix $N$ satisfying all the conditions $(\star_i)$ above.

Finally, choose $M$ satisfying, besides $(\flat)$ and $(\flat\flat)$ above, that $
r_m/M\le 1/k$ and that $N+d_{k{-}1}< m M/k$. Lemma \ref{lemma:distance of some probabilities} now gives us that
$$
D(\rho, \varphi^{(k)})\le \frac{1}{k}
$$
where $\rho=\dfrac{1}{d_k}\displaystyle\sum_{w \in \mathcal{Z}_{P_k}} \delta_{|w|}$.

\

We iterate this  construction. The final outcome is a power series $f \in \F$ whose associated probabilities $\rho_n(f)$ satisfy, as desired,
$$D\big(\rho_{d_k}, \varphi^{(k)}\big)\le \frac{1}{k}\, , \quad \mbox{for each $k \ge 1$}\, .$$

\

\

\


\begin{thebibliography}{99}

\bibitem{AndrievskiiBlatt} \textsc{Andrievskii, V. V. and Blatt, H.-P.}: \textit{Discrepancy of signed measures and polynomial approximation}, Springer, 2002.

\bibitem{Arnold} \textsc{Arnold, L.}: \"Uber die Nullstellenverteilung zuf\"alliger Polynome. \textit{Math. Z.} 92 (1966), 12--18.

\bibitem{Barucha} \textsc{Barucha-Reid, A. T. and Sambandham, M.}:  \textit{Random polynomials}, Probability and Mathematical Statistics, Academic Press, 1986.

\bibitem{BlochPolya} \textsc{Bloch, A. and P\'olya, G.}: On the roots of certain algebraic equations.  \textit{Proc. London Math. Soc.} 33  (1932), 102--114. \

\bibitem{Bourion} \textsc{Bourion, G.}: \textit{L'ultraconvergence dans les s\'eries de Taylor.} Actualit\'es scientifiques et industrielles, no. 472, Paris, 1937.

%

\bibitem{Carlson} \textsc{Carlson, F.}: Sur quelques suites de polynomes, \textit{C. R. Acad. Sci. Paris} 178 (1924), 1677--1680.




\bibitem{ErdosFried} \textsc{Erd\H{o}s, P. and Fried, H.}: On the connection between gaps in power series and the roots of their partial sums. \textit{Trans. Amer. Math. Soc.} 62 (1947), no. 1, 53--61.

\bibitem{Granville} \textsc{Granville, A.}: The distribution of roots of a polynomial. In \textit{Equidistribution in Number Theory. An Introduction.} Granville, A., Rudnick, Z. (Eds.), Springer, 2007.

\bibitem{Henrici}\textsc{Henrici, P.}: \textit{Applied and computational complex analysis. Volume 1.} J. Wiley, 1974.

\bibitem{Kahane} \textsc{Kahane, J. P.}: \textit{Some random series of functions.} Second edition. Cambridge University Press, 1985.

\bibitem{IbraZapo} \textsc{Ibragimov, I. and  Zaporozhets, D.}:  On Distribution of Zeros of Random Polynomials in Complex Plane, \textit{Prokhorov and Contemporary Probability Theory, Springer Proceedings in Mathematics and Statistics}, Volume 33, 2013, pp. 303--323.

\bibitem{IbraZeit} \textsc{Ibragimov, I. and Zeitouni, O.}: On roots of random polynomials. \textit{Trans. Amer. Math. Soc.} 349 (1997), 2427--2441.

\bibitem{Jentzsch} \textsc{Jentzsch, R.}: Untersuchungen zur Theorie der Folgen analytischer Funktionen. \textit{Acta Math.} 41 (1916), no. 1, 219--251.


\bibitem{Hughes} \textsc{Hughes, C. P. and Nikeghbali, A.}:
The zeros of random polynomials cluster uniformly near the unit
circle. \textit{Compositio Math.} 144 (2008) 734--746.

\bibitem{Marden} \textsc{Marden, M.}: \textit{Geometry of polynomials.}
Mathematical Surveys and Monographs,  no. 3. American Mathematical Society, 1966.


\bibitem{Polya} \textsc{P\'olya, G.}:
Arithmetische Eigenschaften der Reihenentwicklungen rationaler
Funktionen. \textit{J. Reine u. angew. Math.} 151 (1921), 1--31.

\bibitem{PolyaSzego} \textsc{P\'olya, G. and  Szeg\H{o}, G.}:
\textit{Problems and Theorems in Analysis I.} Springer-Verlag, 1972.





\bibitem{Pritsker} \textsc{Pritsker, I. E.}:
Zero distribution of Random Polynomials. \texttt{arXiv:1409.1631v2}.



\bibitem{Rosembloom} \textsc{Rosembloom, P. C.}:
Distribution of zeros of polynomials. In
\textit{Lectures
on Functions of a Complex Variable}, (W. Kaplan, editor), pages 265--285.
University of Michigan Press, 1955.


\bibitem{SheppVanderbei}
\textsc{Shepp, L. A. and Vanderbei, R. J.}:
The complex zeros of random polynomials.
\textit{Trans. Amer. Math. Soc.} 347 (1995), 4365--4384.

\bibitem{ShmerlingHochberg} \textsc{Shmerling, E. and Hochberg, K. J.}:
Asymptotic behavior of roots
of random polynomial equations. \textit{Proc. Amer. Math. Soc.} 130 (2002),
2761--2770.


\bibitem{ShparoShur} \textsc{Shparo, D. I., Shur, M. G.}:
On distribution of zeros of random polynomials.
\textit{Vestnik Moskov. Univ. Ser. I Mat. Mekh.} 3 (1962) 40--43.

\bibitem{Szego}\textsc{Szeg\H{o}, G.}: \"Uber die Nullstellen von Polynomen, die in einem Kreis gleichm\"assig konvergieren. \textit{Sitzungsber. Berliner Math. Ges.} 21 (1922), 59--64.


\bibitem{VanVleck}
\textsc{Van Vleck,  E. B.}: On the limits to the absolute values of
the roots of a polynomial. \textit{Bull. Soc. Math. France} 53
(1925), 105--125.
\end{thebibliography}
\end{document}